\newtheorem*{theorem*}{Theorem}
\newtheorem{theorem}{Theorem}[subsection]
\newtheorem{prop}[theorem]{Proposition}
\newtheorem{defi}[theorem]{Definition}
\newtheorem{lemma}[theorem]{Lemma}
\newtheorem{corollary}[theorem]{Corollary}
\newtheorem{conj}[theorem]{Conjecture}
\newtheorem*{corollary*}{Corollary}
\newtheorem*{prop*}{Proposition}
\newcommand{\C}{\mathbb{C}}
\newcommand{\Z}{\mathbb{Z}}
\newcommand{\Pro}{\mathbb{P}}
\newcommand{\Q}{\mathbb{Q}}
\newcommand{\eff}{\text{eff}}
\newcommand{\et}{\text{\'et}}
\newcommand{\ch}{\text{Chow}}
\newcommand{\chef}{\text{Chow}^\eff}
\newcommand{\ho}{\text{Hom}}
\newcommand{\CH}{\text{CH}}
\theoremstyle{remark}
\newtheorem{remark}[theorem]{Remark}
\newcommand{\info}{{
  \bigskip
  \footnotesize

  \textsc{Institut de Math\'ematiques de Bourgogne, UMR 5584 CNRS, Universit\'e Bourgogne, F-21000 Dijon, France}\par\nopagebreak
  \textit{E-mail address}: \texttt{ivan-alejandro.rosas-soto@u-bourgogne.fr}
  }}
\subjclass[2020]{14C25, 14F20, 19E15}
\keywords{Algebraic cycles, \'etale motives, integral Hodge conjecture, motivic cohomology}
\author{Iv\'an Rosas Soto}
\date{December 2023}
\title{Hodge structures through an \'etale motivic point of view}
\begin{document}
\begin{abstract}
We define the category of \'etale Chow motives as the \'etale analogue of the Grothendieck motives and prove that it embeds in the triangulated category of \'etale motives, introduced by Cisinski and D\'eglise, which we denote by $\text{DM}_\et(k)$. Using similar techniques to the ones used by Rosenschon and Srinivas we prove a refined version of the equivalence between the rational Hodge conjecture and the integral \'etale analogue. From these results we deduce an equivalence between the generalized Hodge conjecture with rational coefficients and an \'etale analogue with integral coefficients.
\end{abstract}
\maketitle

\tableofcontents
\section{Introduction}

Let $X$ be a smooth projective variety over $\C$, $k \in \mathbb{N}$ and consider the cycle class map $c^k: \CH^k(X)\to H^{2k}_B(X,\Z(k))$ where $\Z(k)=(2\pi i)^k \Z$ whose image is a subgroup of the Hodge classes $\text{Hdg}^{2k}(X,\Z)$. The integral Hodge conjecture asks whether or not this map is surjective. Putting $n=\text{dim}_\C(X)$, then for $k=0$ and $k=n$ the conjecture is immediately true and also for $k=1$ by the Lefschetz (1,1) theorem, but for $k=2$ the statement is not true as is shown by the counterexamples given by Atiyah and Hirzebruch in \cite{AH} (a torsion class which is not algebraic) and by K\'ollar in \cite{BCC} (a non-algebraic non-torsion class) respectively; nevertheless, with rational coefficients the validity of the statement regarding the surjectivity of the cycle class map is still an open question and it is known as the Hodge conjecture. In a more general and ambitious framework, there exists another conjecture, called the \textit{generalized Hodge conjecture}, which deals with sub-Hodge structures of smooth projective varieties in different weights and levels. To be more precise the conjecture for weight $k$ and level $k-2c$ (or equivalently for weight $k$ and coniveau $c$) says that for any rational sub-Hodge structure $H\subset H^k(X,\Q)$ of level at most $k-2c$ there exists a closed subvariety $Z \hookrightarrow X$ of codimension $\geq c$ such that 
\begin{align*}
    H \subset \text{im}\left\{ H^{k-2c}(\widetilde{Z},\Q(-c)) \xrightarrow{\gamma_*}H^k(X,\Q) \right\}
\end{align*}
where $\gamma_*=i_*\circ d_*$, $i_*$ is the Gysin map associated to the inclusion $i:Y\hookrightarrow X$ and $d:\widetilde{Z}\to Z$ is a resolution of singularities.

Although the use of rational coefficients in the statement of both conjectures is necessary, in recent years Rosenschon and Srinivas proved that the validity of the Hodge conjecture with rational coefficients is equivalent to an \'etale version of it with integral coefficients, using Lichtenbaum cohomology groups, denoted by $\CH^k_L(X)$, as presented in \cite[Theorem 1.1]{RS}. For this, they constructed a L-cycle class map $c^k_L:\CH^k_L(X) \to H^{2k}_B(X,\Z(k))$ such that $\text{im}(c_L^k)\subset \text{Hdg}^{2k}(X,\Z)$. This new approach gives that the restriction $\CH^k_L(X)_\text{tors}\to H^{2k}_B(X,\Z(k))_\text{tors}$ is surjective in an unconditional way. 

In the present article we study an \'etale (or Lichtenbaum) version of the generalized Hodge conjecture using Lichtenbaum cohomology groups, and ask whether or not it is possible to give an equivalence between the generalized Hodge conjecture with $\Q$-coefficients and its \'etale integral version. With this purpose we revisited the equivalence regarding the validity of the Hodge conjecture given in \cite{RS}. 

We start by looking at the two different definitions for the \'etale analogue of Chow groups: Using the triangulated category of motives as in \cite{CD16} and Lichtenbaum cohomology groups as in \cite{RS}. For the first case, we set the functorial behaviour of the \'etale Chow groups with respect to projective morphisms. These functorial properties allows us to define correspondences in the \'etale setting and a composition law as in the classical case. After that we present some of the principal properties of Lichtenbaum cohomology and we make the link between the two definitions of \'etale Chow group. With the the properties that we give we construct the category of \'etale Chow motives, denoted by $\ch_\et(k)$, fits in the following commutative diagram:
 \[
  \begin{tikzcd}
\ch(k)^{op} \arrow{d}\arrow{r}{\Phi}&  \text{DM}(k) \arrow{d}\\
\ch_\et(k)^{op}  \arrow{r}{\Phi^\et} & \text{DM}_{\et}(k)
  \end{tikzcd}
\]
where $\ch(k)$ is the category of Chow motives, $ \text{DM}(k)$ and $ \text{DM}_{\et}(k)$ are the triangulated categories of motives over $k$ and its \'etale counterpart respectively, and the horizontal arrows are full embedding. This construction gives a similar construction to the category defined by Kahn in \cite{kahn2002}. 

After that we present the following refined version of \cite[Theorem 1.1]{RS}:

\begin{prop*}[see Proposition \ref{propE}]
Let $X$ be a complex smooth projective variety and consider a sub-Hodge structure $W \subset H^{2k}_B(X,\Z(k))$ of type $(k,k)$. Then $W$ is $L$-algebraic, i.e. $W \subset \text{im}(c^k_L)$, if and only if $W\otimes \Q$ is algebraic.
\end{prop*}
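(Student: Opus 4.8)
The plan is to prove the two implications separately, with the forward direction being essentially formal and the converse carrying all the content.

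For the implication $W \subset \text{im}(c^k_L) \Rightarrow W\otimes\Q$ algebraic, I would simply tensor the inclusion with $\Q$ and invoke the rational comparison between Lichtenbaum and motivic cohomology. Since the natural map $\CH^k(X)\to\CH^k_L(X)$ becomes an isomorphism after $\otimes\,\Q$ and is compatible with the cycle class maps, one has $\text{im}(c^k_L)\otimes\Q = \text{im}(c^k_\Q)$ inside $H^{2k}_B(X,\Q(k))$, where $c^k_\Q\colon\CH^k(X)_\Q\to H^{2k}_B(X,\Q(k))$ is the rational cycle class map. Hence $W\otimes\Q \subset \text{im}(c^k_\Q)$, i.e. $W\otimes\Q$ is algebraic.

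For the converse, fix $w\in W$ and use that its image in $H^{2k}_B(X,\Q(k))$ is algebraic. Writing a rational algebraic preimage over a common denominator and using the rational comparison again, I would produce $\gamma\in\CH^k_L(X)$ and $N\in\Z_{>0}$ with $c^k_L(\gamma)-Nw$ torsion; the unconditional surjectivity of $c^k_L$ onto $H^{2k}_B(X,\Z(k))_{\text{tors}}$ (the Rosenschon--Srinivas torsion statement recalled in the introduction) then lets me absorb this torsion class and arrange $c^k_L(\gamma)=Nw$ exactly. It remains to divide by $N$, and this is the crux. The key input is that the reduction of the $L$-cycle class map modulo $N$,
\[
\overline{c}\colon \CH^k_L(X)/N \longrightarrow H^{2k}_B(X,\Z(k))/N,
\]
is injective. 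I would obtain this from the Beilinson--Lichtenbaum comparison: for finite coefficients $H^{2k}_L(X,\Z/N(k))\cong H^{2k}_{\et}(X,\mu_N^{\otimes k})\cong H^{2k}_B(X,\Z/N(k))$, compatibly with cycle classes. Placing the universal-coefficient (Bockstein) sequences for $\CH^\bullet_L$ and for Betti cohomology in a commutative ladder and chasing the diagram---using that the middle vertical map is this comparison isomorphism and the left horizontal maps are injective---yields injectivity of $\overline{c}$. Granting this, from $c^k_L(\gamma)=Nw\equiv 0 \pmod N$ I get $\gamma\in N\,\CH^k_L(X)$, say $\gamma=N\gamma'$; then $N\bigl(c^k_L(\gamma')-w\bigr)=0$, so $c^k_L(\gamma')-w$ is torsion, and a second application of the torsion surjectivity produces $\tau$ with $w=c^k_L(\gamma'-\tau)$, proving $w\in\text{im}(c^k_L)$.

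The main obstacle is exactly this mod-$N$ injectivity: it is precisely the property that fails for the ordinary Chow group---whence the counterexamples of Atiyah--Hirzebruch and Koll\'ar, where $Nw$ is algebraic but $w$ is not---and it is the point where the passage to Lichtenbaum cohomology becomes essential. Everything else, namely clearing denominators, the rational comparison, and the torsion surjectivity, is either formal or already available, so I expect the work to concentrate on setting up the Bockstein ladder correctly and verifying its compatibility with the cycle class maps.
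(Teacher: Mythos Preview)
Your argument is correct and, in fact, more streamlined than the paper's. The key step---injectivity of $\overline{c}\colon \CH^k_L(X)/N \to H^{2k}_B(X,\Z(k))/N$---does follow from the Bockstein ladder exactly as you describe, and this injectivity is precisely what the paper records (through Deligne cohomology) in Remark~\ref{remDel}. Your remaining steps (clearing denominators, absorbing torsion twice) are clean and correct.

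The paper, however, does not argue this way. Following Rosenschon--Srinivas, it introduces the obstruction group $Z^k_{W,L}(X)=W/\bigl(\text{im}(c^k_L)\cap W\bigr)$ and proves it is torsion-free by passing through Deligne cohomology: it sets $H^{2k}_{W,D}(X,\Z(k))=\pi^{-1}(W)$, uses the generalized Roitman isomorphism $\bigl(\CH^k_L(X)_{\text{hom}}\bigr)_{\text{tors}}\xrightarrow{\sim} J^k(X)_{\text{tors}}$ (Lemma~\ref{lemma}) together with the divisibility of $J^k(X)$ to control the torsion rows, and then does a torsion/free diagram chase to conclude that $\text{coker}\bigl(c^k_{D,L}\big|_{W}\bigr)$, and hence $Z^k_{W,L}(X)$, is torsion-free. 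The equivalence with $\text{HC}^k(X,W)_\Q$ is then read off from the short exact sequence $0\to I^k_{W,L}/I^k_W\to Z^k_W\to Z^k_{W,L}\to 0$ and a rank comparison.

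What each approach buys: your route is shorter and isolates the single cohomological fact (mod-$N$ injectivity of $c^k_L$) that distinguishes the Lichtenbaum from the classical setting; it also makes transparent why the Atiyah--Hirzebruch/Koll\'ar obstructions disappear. The paper's route, by embedding everything into the Deligne/Jacobian framework, yields finer structural statements along the way (e.g.\ the torsion isomorphism with $H^{2k}_{W,D}$, the Roitman-type result of Lemma~\ref{lemma}) which are of independent interest and are reused later. The only caution for your write-up is to state carefully the compatibility making the Bockstein ladder commute---i.e.\ that $c^k_L$ reduces mod $N$ to the Artin comparison $H^{2k}_{\et}(X,\mu_N^{\otimes k})\cong H^{2k}_B(X,\Z/N(k))$; the paper establishes this (via Deligne cohomology) in Remark~\ref{remDel}, and you should cite or reproduce that verification.
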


Starting from \cite[Remark 5.1.a]{RS}, we use the \'etale analogue of the generalized Hodge conjecture given there in order to study the classical version. In Proposition \ref{k} we give a complete proof of the equivalence between the different versions of the generalized Hodge conjecture (usual case and Lichtenbaum) in weight $2k-1$ and level 1, result that was stated in the same remark in \cite{RS}. For that, we split the proof in two parts: in the first, we prove that the L-generalized Hodge conjecture in weight $2k-1$ and level 1 is equivalent to the fact that a part of the Hodge conjecture for the product of $X\times C$ is true for all smooth and projective curve $C$, after that we invoke Proposition \ref{propE}. To finalize, our main results are the following:

 First, we obtain a characterization of the generalized conjecture (for all $X\in \text{SmProj}_\C$) given in \cite{RS} which follows the idea of the classical case, that is, in term of realization of motives previously defined in section 2 and the Hodge conjecture:
 
 \begin{theorem*}[see Theorem \ref{teo}]
The Lichtenbaum generalized Hodge conjecture  for all $X \in \text{SmProj}_\C$ holds if and only if the following two conditions hold: 
\begin{itemize}
    \item the Lichtenbaum Hodge conjecture holds,
    \item a homological \'etale motive is effective if and only if its Hodge realization is effective.
\end{itemize}
\end{theorem*}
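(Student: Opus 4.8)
The plan is to treat the statement as a biconditional and to prove each implication by translating the Lichtenbaum generalized Hodge conjecture into the language of homological \'etale motives and their Hodge realizations, using the full embedding $\Phi^\et$ and the composition law for \'etale correspondences set up in Section 2, together with the refined equivalence of Proposition \ref{propE}.

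For the forward implication, assume the Lichtenbaum generalized Hodge conjecture holds for every $X \in \text{SmProj}_\C$. The Lichtenbaum Hodge conjecture is then the special case of weight $2k$ and coniveau $c=k$: a sub-Hodge structure $W \subset H^{2k}_B(X,\Z(k))$ of type $(k,k)$ is precisely one of level $0$, and the generalized conjecture in this range asserts exactly that such $W$ is supported in codimension $\geq k$, i.e. is $L$-algebraic, which via Proposition \ref{propE} is the content of the Lichtenbaum Hodge conjecture. For the effectivity condition, let $M$ be a homological \'etale motive whose Hodge realization is an effective Hodge structure; the converse direction being automatic, since the realization functor preserves effectivity, the nontrivial task is to produce effectivity of $M$ from that of its realization. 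Writing the realization, after the Tate twist making it effective, as a sub-Hodge structure of $H^k_B(X,\Z(\ast))$ of the prescribed level, I would apply the generalized conjecture to obtain a closed subvariety $Z \hookrightarrow X$ of the correct codimension and a Gysin map $\gamma_*$ whose image contains it; transporting this geometric datum back through $\Phi^\et$ and the \'etale correspondence calculus exhibits $M$ as a direct summand of the twisted \'etale motive of a resolution $\widetilde{Z}$, hence as an effective object of $\ch_\et(k)$.

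For the backward implication, assume both the Lichtenbaum Hodge conjecture and the effectivity equivalence. Given a sub-Hodge structure $H \subset H^k_B(X,\Z(\ast))$ of level at most $k-2c$, I would first package $H$, after the Tate twist that renders it effective, as the Hodge realization of a homological \'etale motive $M$; by hypothesis $M$ is then an effective \'etale motive, which provides a smooth projective $\widetilde{Z}$ of the expected dimension together with a motivic model realizing $H$. It remains to show that the inclusion $H \hookrightarrow H^k_B(X,\Z(\ast))$ is induced by an $L$-algebraic correspondence: it is a morphism of Hodge structures, hence a Hodge class of type $(\ast,\ast)$ on the $\ho$-motive relating $X$ and $\widetilde{Z}$, and the Lichtenbaum Hodge conjecture together with Proposition \ref{propE} makes it $L$-algebraic. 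Composing this $L$-algebraic correspondence with the effective geometric model yields a closed subvariety $Z \hookrightarrow X$ of codimension $\geq c$ realizing $H$ inside $\text{im}(\gamma_*)$, which is the assertion of the Lichtenbaum generalized Hodge conjecture.

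The main obstacle I anticipate lies in the backward direction: reconciling the two inputs so that the correspondence furnished by the Hodge conjecture genuinely factors through a subvariety of the correct codimension, rather than merely inducing the right map of Hodge structures after tensoring with $\Q$. Controlling the interaction of Gysin maps, Tate twists and the Hodge realization at the level of integral Lichtenbaum cohomology---where Proposition \ref{propE} is exactly what permits passage between integral $L$-algebraicity and rational algebraicity without discarding torsion information---is the delicate technical heart of the argument, and is where the effectivity hypothesis and the embedding $\Phi^\et$ must be deployed in tandem.
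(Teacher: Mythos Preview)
Your proposal is essentially correct and follows the same strategy as the paper's proof: in both directions one mediates between sub-Hodge structures and \'etale correspondences via the L-Hodge conjecture and Proposition \ref{propE}, with the effectivity hypothesis supplying the auxiliary variety carrying $H(c)$.

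Two small points where you drift from the paper. First, in the backward direction the paper does not explicitly ``package $H$ as a motive'' and then invoke effectivity; it simply asserts the existence of a smooth projective $Y$ with $H(c)\subset H^{n-2c}(Y,\Z)$, then writes down the concrete morphism of Hodge structures
\[
f:H^{n-2c}(Y,\Z)\xrightarrow{\mathrm{pr}_1} H(c)\xrightarrow{\otimes\Z(-c)} H\hookrightarrow H^n(X,\Z),
\]
recognizes $f$ as a Hodge class on $Y\times X$, and applies the L-Hodge conjecture together with Proposition \ref{propE} to obtain $\gamma\in\CH_L^{d_Y+c}(Y\times X)$ with $H\subset\gamma_*H^{n-2c}(Y,\Z)$. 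Your packaging step is the natural way to justify the existence of $Y$ (one needs the L-Hodge conjecture on $X\times X$ to make the projector onto $H$ algebraic before the effectivity hypothesis can be applied), so you are in fact filling a gap the paper leaves implicit; just say so explicitly rather than treating it as automatic. Second, your conclusion about producing ``a closed subvariety $Z\hookrightarrow X$ of codimension $\geq c$'' conflates Grothendieck's original support formulation with the correspondence formulation of Conjecture \ref{conjL}: the paper's $\text{GHC}_L$ only demands the correspondence $\gamma$, so the proof terminates once $\gamma$ is in hand, and the obstacle you anticipate in your final paragraph does not in fact arise.
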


With this, we obtain as a corollary the following equivalence: 

\begin{corollary*}[see Corollary \ref{corf}]
The generalized Hodge conjecture with $\Q$-coefficients holds if and only if the generalized integral L-Hodge conjecture holds.
\end{corollary*}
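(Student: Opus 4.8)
The plan is to derive the corollary from the main theorem (Theorem \ref{teo}) by showing that the two characterizing conditions for the Lichtenbaum generalized Hodge conjecture are each equivalent to the corresponding classical $\Q$-coefficient statement, and that the combination collapses to a single biconditional. Concretely, the Lichtenbaum generalized Hodge conjecture for all $X \in \text{SmProj}_\C$ holds if and only if (a) the Lichtenbaum Hodge conjecture holds, and (b) a homological \'etale motive is effective precisely when its Hodge realization is effective. My task is to translate both conditions into the rational world.

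\medskip
\noindent\textbf{Step 1: handle the Hodge-conjecture condition.} First I would invoke Proposition \ref{propE} directly: a sub-Hodge structure $W \subset H^{2k}_B(X,\Z(k))$ of type $(k,k)$ is $L$-algebraic if and only if $W\otimes\Q$ is algebraic. Applying this across all $X$ and all $k$ shows that the Lichtenbaum Hodge conjecture (surjectivity of $c^k_L$ onto $\text{Hdg}^{2k}(X,\Z)$ up to the identification of Hodge classes) is equivalent to the classical rational Hodge conjecture. This is the same refinement that underlies \cite[Theorem 1.1]{RS}, so condition (a) of Theorem \ref{teo} is equivalent to the $\Q$-coefficient Hodge conjecture.

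\medskip
\noindent\textbf{Step 2: handle the effectivity condition.} Next I would argue that condition (b) is, under the Hodge conjecture, equivalent to the generalized Hodge conjecture with $\Q$-coefficients for the relevant Hodge structures. The point is that the generalized Hodge conjecture asserts exactly that a rational sub-Hodge structure of a given level is supported on a subvariety of the appropriate codimension, which in motivic language is the statement that a homological motive whose Hodge realization is effective (i.e. concentrated in the correct range of weights) is itself effective. Since the Hodge realization functor is faithful on the relevant subcategory and commutes with the rationalization of \'etale motives, the integral effectivity criterion for \'etale motives and the rational effectivity criterion for Chow/homological motives coincide once Step 1 identifies the algebraicity notions. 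Thus condition (b), combined with (a), matches the classical generalized Hodge conjecture.

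\medskip
\noindent\textbf{Step 3: assemble the equivalence.} Finally I would combine the two translations. By Theorem \ref{teo}, the Lichtenbaum generalized Hodge conjecture holds iff (a) and (b) both hold; by Steps 1 and 2 this is equivalent to the conjunction of the rational Hodge conjecture and the rational effectivity criterion, which is precisely the generalized Hodge conjecture with $\Q$-coefficients (the classical generalized conjecture already contains the ordinary Hodge conjecture as its weight-$2k$, level-$0$ case). Conversely, the rational generalized Hodge conjecture yields both conditions by reversing the same implications. This gives the desired biconditional. The main obstacle I anticipate is Step 2: one must verify carefully that the integral effectivity statement for \'etale homological motives does not lose information when passed to $\Q$-coefficients — that is, that no torsion phenomenon obstructs the equivalence — and this is exactly where the Lichtenbaum framework and the torsion-surjectivity of $c^k_L$ established via Proposition \ref{propE} do the essential work.
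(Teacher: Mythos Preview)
Your proposal is correct and follows essentially the same route as the paper: the corollary is deduced by juxtaposing Theorem \ref{teo} with the classical Grothendieck characterization of the rational generalized Hodge conjecture (stated in the paper just after Conjecture \ref{conj2}), and then using Proposition \ref{propE}/\cite[Theorem 1.1]{RS} to identify the Hodge-conjecture conditions, so that both generalized conjectures reduce to the same pair of statements. The paper gives no further detail beyond ``coming from the previous characterizations,'' so your Steps 1--3 are exactly the intended unpacking; your caveat about Step 2 is fair, but the paper does not address it more carefully either.
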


The article is organized as follows: In the first two subsections of section 2 we recall the different definitions that we can use as the \'etale analogue of Chow groups, using the category of \'etale motives as in \cite{CD16} and Lichtenbaum cohomology groups, stating the similarities between the two definitions. In subsections 1.3 and 1.4 we present the correspondences with their classical operation and the construction of the category of Chow \'etale motives as in the case of Chow motives, for instance see \cite{MNP}.

In sub-section 3.1 we mainly focus on the Hodge conjecture and its generalized version. We start by some reminders and definitions of classical Hodge theory. For the complex case we give the proof of the generalized Roitman theorem and with this result we show that if we restrict to a sub-Hodge structure $W \subset H^{2k}_B(X,\Z(k))$ and ask whether or not $W\otimes \Q$ is algebraic in the usual sense, it is equivalent to ask if $W$ is L-algebraic. 

With respect to the generalized Hodge conjecture, in sub-section 3.3 we show several equivalences between the classical case and the L-version (involving Lichtenbaum cohomology and integral Hodge structures) in different weights and levels using characterizations through the Hodge conjecture (\'etale and classical setting) and the effectiveness of motives. In the last subsection we mention the consequences of the equivalence between the classical and \'etale version of the generalized Hodge conjecture regarding Bardelli's example in \cite{Bar}.

\section*{Conventions}
For a field $k$ we denote the n-dimensional $k$-projective space as $\Pro^n_k$ and $\text{SmProj}_k$ is the category of smooth and projective reduced $k$-schemes. Let $G$ be an abelian group, $\ell$ a prime number and $r\geq 1$, then we denote  $G[\ell^r]:=\left\{g \in G \ | \ \ell^r\cdot g =0 \right\}$, $G\{\ell\}:=\bigcup_{r} G[\ell^r]$, $G_\text{tors}$ denotes the torsion sub-group of $G$ and $G_{\text{free}}:=G/G_{\text{tors}}$ its torsion free quotient. The prefix ``L-" indicates the respective version of some result, conjecture, group, etc. in the Lichtenbaum setting. $H_B^i(X,\Z(n))$ denotes the Betti cohomology groups of $X$. 

\section*{Acknowledgements}

The author thanks his advisors Fr\'ed\'eric D\'eglise and Johannes Nagel for their suggestions, useful discussions and the time for reading this article. This work is supported by the EIPHI Graduate School (contract ANR-17-EURE-0002) and the FEDER/EUR-EiPhi Project EITAG. We thank the French “Investissements d’Avenir” project ISITE-BFC (ANR-15-IDEX-0008)
 and the French ANR project “HQ-DIAG” (ANR-21-CE40-0015).

\section{\'Etale Chow groups and Lichtenbaum cohomology}
We have an \'etale analogue of the Chow groups, which leads us to the construction of the category of \'etale Chow motives with integral coefficients. In section 1.1 we recall the definition of the \'etale Chow groups and present correspondences in this setting. It should be noted that the category that we construct cannot be defined as the subcategory of $\text{DM}_\et(k)$ generated by elements of pure weight 0, in the sense of Bondarko, contrary to the Nisnevich case, for the definition of weight structure see \cite[Section 1]{Bon} and \cite[Theorem 2.1.1]{Bon}. For a detailed explanation of the non-existence of the weight structure see \cite[Remark 7.2.26]{CD16}.

\subsection{\'Etale Chow groups}

In this subsection we use the category of \'etale motives, since we do not mention much more details about the construction and/or functorial behaviour of the category, for further details about these properties we refer the reader to \cite{Ayo} and \cite{CD16}. Let $k$ be a field and let $R$ be a commutative ring. We denote the category of effective motivic \'etale sheaves with coefficients in $R$ over the field $k$ as $\text{DM}^{\eff}_\et(k,R)$. If we invert the Lefschetz motive, we then obtain the category of motivic \'etale sheaves denoted by $\text{DM}_\et(k,R)$. If no confusion arises we denote $\text{DM}^{(\eff)}_\et(k):=\text{DM}^{(\eff)}_\et(k,\Z)$. 
One defines the \textbf{\'etale motivic cohomology} group of bi-degree $(m,n)$ with coefficients in a commutative ring $R$ as
\begin{align*}
H_{M,\et}^m(X,R(n)):= \ho_{\text{DM}_\et(k,R)}(M_\et(X),R(n)[m]).
\end{align*}
where $M_\et(X)=\rho^* M(X)$ with $\rho$ is the canonical map associated to the change of topology $\rho:\left(\text{Sm}_k\right)_\et\to \left(\text{Sm}_k\right)_{\text{Nis}}$ which induces an adjunction $\rho^*:= \mathbf{L}\rho^*:\text{DM}(k)\rightleftarrows  \text{DM}_\et(k):\mathbf{R}\rho_*=:\rho_*$. In particular we define the \textbf{\'etale Chow groups} of codimension $n$ as the \'etale motivic cohomology in bi-degree $(2n,n)$ with coefficients in $\Z$, i.e. 
\begin{align*}
\text{CH}_\et^n(X):&=H_{M,\et}^{2n}(X,\Z(n))\\
&=\ho_{\text{DM}_\et(k)}(M_\et(X),\Z(n)[2n]).
\end{align*}

\begin{remark}
    Let $k$ be a field and let $\ell$ be a prime number different from the characteristic of $k$. By the rigidity theorem for torsion motives, see \cite[Theorem 4.5.2]{CD16}, we have an isomorphism 
    $$H_{M,\et}^m(X,\Z/\ell^r(n))\simeq H_\et^m(X,\mu_{\ell^r}^{\otimes n}).$$
\end{remark}

\subsubsection{Gysin morphism and functoriality properties}

With respect to functoriality properties of the \'etale Chow groups we should mention that we can recover well-known properties analogous to that of classical Chow groups, such as pull-back and proper pushforwards of cycles. In particular, we get a degree map. All these properties will arise from the properties of the category $\text{DM}_\et(k)$ (resp. $\text{DM}(k)$) and the covariant functor $M_\et(-)$ (resp. $M(-)$). 

Let us recall that the canonical map $\rho:\left(\text{Sm}_k\right)_\et\to \left(\text{Sm}_k\right)_{\text{Nis}}$ induces an adjunction of triangulated categories
\begin{align*}
\rho^*: \text{DM}_{gm}(k)\rightleftarrows \text{DM}_{gm,\et}(k):\rho_*,
\end{align*}
which leads us to express the \'etale Chow groups in terms of morphism in the category $\text{DM}(k)$ as follows
\begin{align*}
H^m_{M,\et}(X,\Z(n))&:=\text{Hom}_{\text{DM}_\et(k)} (M_\et(X),\Z_\et(n)[m])\\
&\simeq  \text{Hom}_{\text{DM}(k)} (M(X),\rho_*\Z_\et(n)[m]).
\end{align*} 

\begin{prop}
The comparison map 
\begin{align*}
\sigma^{m,n}:H^m_M(X,\Z(n)) \to H^m_{M,\et}(X,\Z(n))
\end{align*}
coming from the adjunction of triangulated categories, is compatible with pullbacks, pushforward and intersection products.
\end{prop}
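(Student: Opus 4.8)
The plan is to make the map $\sigma^{m,n}$ completely explicit and then deduce each compatibility from the functoriality of $\rho^*$. Under the canonical identifications $\rho^*M(X)=M_\et(X)$ and $\rho^*\Z(n)=\Z_\et(n)$, the comparison map is nothing but the action of the functor $\rho^*$ on morphisms: a class $\phi\in\text{Hom}_{\text{DM}(k)}(M(X),\Z(n)[m])$ is sent to
\begin{align*}
\sigma^{m,n}(\phi)=\rho^*(\phi)\colon M_\et(X)\to\Z_\et(n)[m].
\end{align*}
Equivalently, it is postcomposition with the unit $\eta\colon\Z(n)[m]\to\rho_*\Z_\et(n)[m]$ of the adjunction, the two descriptions agreeing through the adjunction isomorphism. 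I would begin by recording this reformulation, which turns every assertion into a statement about how $\rho^*$ interacts with the structural morphisms defining the three operations.

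Compatibility with pullbacks is then purely formal. For $f\colon X\to Y$ the pullback $f^*$ is precomposition with $M(f)$ (respectively $M_\et(f)$ in the \'etale category); since $\rho^*$ is a functor with $\rho^*M(f)=M_\et(f)$, one has $\rho^*(\phi\circ M(f))=\rho^*(\phi)\circ M_\et(f)$, which is exactly the required commutativity. For the intersection product I would use that $\rho^*$ is symmetric monoidal and commutes with the external product and with pullback along the diagonal $\Delta_X$; as the cup product is assembled from these two ingredients, its preservation follows by combining monoidality of $\rho^*$ with the pullback case already treated.

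The essential point is the pushforward. For a projective morphism $f$ of relative codimension $c$ the proper pushforward is, by construction, precomposition (after a Tate twist and shift) with the Gysin morphism $\mathfrak{g}_f\in\text{Hom}_{\text{DM}(k)}(M(Y),M(X)(c)[2c])$, so that $f_*(\phi)=\phi(c)[2c]\circ\mathfrak{g}_f$. Granting that $\rho^*$ carries $\mathfrak{g}_f$ to the corresponding \'etale Gysin morphism $\mathfrak{g}_f^\et$, the compatibility follows exactly as for pullbacks:
\begin{align*}
\rho^*\bigl(\phi(c)[2c]\circ\mathfrak{g}_f\bigr)&=(\rho^*\phi)(c)[2c]\circ\rho^*(\mathfrak{g}_f)\\
&=(\rho^*\phi)(c)[2c]\circ\mathfrak{g}_f^\et=f_*(\rho^*\phi).
\end{align*}
Thus all three statements reduce to the single claim that $\rho^*$ sends Gysin morphisms to Gysin morphisms.

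This preservation of Gysin morphisms is the main obstacle, and the only place where genuine structural input is needed. I would deduce it from the fact that $(\rho^*,\rho_*)$ is a premotivic morphism: $\rho^*$ is monoidal and commutes with the operations $f^*$ and $f_\#$ for $f$ smooth, as well as with Tate twists and the homotopy and localization structures. Since the Gysin morphism of a closed immersion of codimension $c$ is constructed from exactly these data via deformation to the normal cone and the purity isomorphism, and since a general projective $f$ factors as a closed immersion into a projective space $\Pro^N_Y$ over $Y$ followed by the smooth projection---whose Gysin map is governed by the projective bundle formula---the functor $\rho^*$ carries $\mathfrak{g}_f$ to $\mathfrak{g}_f^\et$. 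The delicate verification is precisely this compatibility of purity with the change of topology $\rho$; once it is secured, the three compatibilities follow formally from the naturality of $\rho^*$.
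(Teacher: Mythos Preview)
Your proposal is correct and follows essentially the same approach as the paper: both identify $\sigma^{m,n}$ with the action of $\rho^*$ on morphisms (equivalently, postcomposition with the unit $\Z(n)\to\rho_*\Z_\et(n)$), reduce pullback and pushforward compatibility to the fact that $\rho^*$ intertwines the structural maps $M(f)$ and the Gysin morphisms on motives, and handle products via monoidality of $\rho^*$ together with the diagonal description $\alpha\cdot\beta=\Delta^*(\alpha\otimes\beta)$. The only difference is one of detail: the paper simply records the commutative squares for $f^!$ and $f_*$ and refers to \cite{DegI}, \cite{DegII} for the existence and naturality of Gysin morphisms, whereas you spell out \emph{why} $\rho^*$ preserves Gysin morphisms by invoking that $(\rho^*,\rho_*)$ is a premotivic adjunction compatible with $f^*$, $f_\#$, Tate twists and the purity/deformation-to-the-normal-cone package. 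Your version is thus a more explicit unpacking of the same argument.
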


\begin{proof}
Consider the adjunction of triangulated categories
\begin{align*}
\rho^*: \text{DM}(k)\rightleftarrows \text{DM}_\et(k):\rho_*
\end{align*}
where $\rho^*$ is the functor induced by the \'etale sheafification and $\rho_*$ is the right adjoint which is a forgetful functor which forgets that the complexes are \'etale. As we have said, the cycle class map is obtained by the following use of the adjunction
\begin{align*}
\text{Hom}_{ \text{DM}(k)}\left(M(X),\Z(n)[m]\right)\xrightarrow{\rho^*}\text{Hom}_{ \text{DM}_\et(k)}(\rho^* M(X),\rho^*\Z(n)[m])
\end{align*}
where $\Z(n)$ is the motivic complex of twist n and $M(X)$ is the  triangulated motive associated with $X$. By adjunction we have
\begin{align*}
\text{Hom}_{ \text{DM}_\et(k)}(\rho^* M(X),\rho^*\Z(n)[m]) \simeq \text{Hom}_{ \text{DM}(k)}(M(X),\rho_*\rho^*\Z(n)[m])
\end{align*}
so we obtain a canonical map $\Z(n)\to \rho_* \rho^* \Z(n)=\rho_* \Z_\et(n)$ given by the unit transformation associated to the adjunction. Now, the functorial properties of maps $f:X\to Y$ come from the (covariant) functorial properties of the motive $M(X)$ and the existence of Gysin maps, for more details about the existence of Gysin morphisms we refer to \cite{DegI} and \cite{DegII}. To be more precise: Let $f:Y\to X$ be a morphism of relative dimension $d$, then we have induced commutative squares 
\[
  \begin{tikzcd}
   M(X)(d)[2d]\arrow{r}{f^!} \arrow{d}{\rho^*} & M(Y) \arrow{d}{\rho^*} & & & M(Y) \arrow{r}{f_*} \arrow{d}{\rho^*} & M(X) \arrow{d}{\rho^*}\\
   M_\et(X)(d)[2d] \arrow{r}{f^!} & M_\et(Y) & & & M_\et(Y) \arrow{r}{f_*} & M_\et(X)
  \end{tikzcd}
\]
which induce the pullback and pushforward for proper morphisms. In fact, any morphism of motivic complexes like the one given by the adjunction will yield a morphism of cohomology theory compatible both with pullbacks and pushforward. 

Finally, we need to prove the compatibility with respect to products. This property comes from the fact that we have a quasi-isomorphism
\begin{align*}
\Z(i)\otimes \Z(j) \xrightarrow{\sim} \Z(i+j)
\end{align*}
and that the functor $\rho^*$ is monoidal, i.e. $\rho^*(M\otimes N) \simeq \rho^*(M)\otimes \rho^*(N)$, therefore we also obtain that 
\begin{align*}
\Z(i)_\et\otimes \Z(j)_\et \xrightarrow{\sim} \Z(i+j)_\et
\end{align*}
For intersection products the remaining part is to consider that the product comes from the operation $\alpha \cdot \beta =\Delta^*(\alpha \otimes \beta)$.
\end{proof}

\subsection{Lichtenbaum cohomology groups}

We consider a second notion of the \'etale version of Chow groups, which is the well known Lichtenbaum cohomology groups, groups defined by the hypercohomology of the \'etale sheafification of Bloch's complex sheaf. These groups are characterized by Rosenschon and Srinivas in \cite{RS} using \'etale hypercoverings. In this context, we consider $\text{Sm}_k$ as the category of smooth separated $k-$schemes over a field $k$. We denote $z^n(X,\bullet)$ the cycle complex of abelian groups defined by Bloch 
\begin{align*}
   z^n(X,\bullet): \cdots \to z^n(X,i) \to \cdots \to z^n(X,1)\to z^n(X,0) \to 0 
\end{align*}
where the differentials are given by the alternating sum of the pull-backs of the face maps and whose homology groups define the higher Chow groups $\text{CH}^n(X,2n-m)=H_m(z^n(X,\bullet))$.

Let us recall that  $z^n(X,i)$ and the complex $z^n(X,\bullet)$ is covariant functorial for proper maps and contravariant functorial for flat morphisms between smooth $k$-schemes, see \cite[Proposition 1.3]{Blo}, therefore for a topology $t \in \left\{\text{flat}, \ \et, \ \text{Nis},  \ \text{Zar} \right\}$ we have a complex of $t$-presheaves
$z^n(-,\bullet):U \mapsto z^n(U,\bullet)$. In particular the presheaf $z^n(-,i):U \mapsto z^n(U,i)$ is a sheaf for $t \in \left\{\text{flat}, \ \et, \ \text{Nis},  \ \text{Zar} \right\}$, see \cite[Lemma 3.1]{Ge04}, and then $z^n(-,\bullet)$ is a complex of sheaves for the small \'etale, Nisnevich and Zariski sites of $X$. We set the complex of $t$-sheaves 
\begin{align*}
R_X(n)_t = \left(z^n(-,\bullet)_t \otimes R \right)[-2n]
\end{align*}
where $R$ is an abelian group and for our purposes we just consider $t= \text{Zar}$ or $\et$ and then we compute the hypercohomology groups $\mathbb{H}^m_t(X,R_X(n)_t)$. For example, setting $t=$ Zar and $R=\Z$ the hypercohomology of the complex allows us to recover the higher Chow groups $\text{CH}^n(X,2n-m)\simeq \mathbb{H}_{\text{Zar}}^m(X,\Z(n))$. We denote the motivic and Lichtenbaum cohomology groups with coefficients in $R$ as
\begin{align*}
H_M^m(X,R(n))= \mathbb{H}_\text{Zar}^m(X,R(n)), \quad H_L^m(X,R(n))=\mathbb{H}_\et^m(X,R(n))
\end{align*}
and in particular we set $\text{CH}_L^n(X)=H^{2n}_L(X,\Z(n))$. Let $\pi: X_\et\to X_{\text{Zar}}$ be the canonical morphism of sites, then the associated adjunction formula $\Z_X(n)\to R \pi_* \pi^* \Z_X(n)= R \pi_* \Z_X(n)_\et$ induces \textit{comparison morphisms}
\begin{align*}
\text{H}_M^m(X,\Z(n)) \xrightarrow{\kappa^{m,n}} \text{H}_L^m(X,\Z(n))
\end{align*}
for all bi-degrees $(m,n) \in \Z^2$. We can say more about the comparison map, due to \cite[Theorem 6.18]{VV}, the comparison map  $\kappa^{m,n}:H^m_M(X,\Z(n))\to H^m_L(X,\Z(n))$ is an isomorphism for $m\leq n+1$ and a monomorphism for $m \leq n+2$.

In some cases it is possible to obtain more information about the Lichtenbaum cohomology groups and the comparison map between them and higher Chow groups. For instance there is a quasi-isomorphism $A_{X}(0)_\et=A$, the latter as an \'etale sheaf, thus we obtain that the Lichtenbaum cohomology agrees with the usual \'etale cohomology, i.e.
$\text{H}_L^m(X,A(0)) \simeq H^n_\et(X,A)$ for all $m \in \Z_{\geq 0}$ and in particular  $\text{CH}^0_L(X)=\Z^{\pi_0(X)}$.  In the next step, $n=1$, since there is a quasi-isomorphism of complexes $\Z_X(1)_\et\sim \mathbb{G}_m[-1]$ we obtain the following isomorphisms 
\begin{align*}
    \text{CH}^1(X)&\simeq \text{CH}_L^1(X)=\text{Pic}(X)\\
    \text{H}_L^3(X,\Z(1)) &\simeq  \text{H}_\et^3(X,\mathbb{G}_m[-1])  = \text{Br}(X)
\end{align*}
where $\text{Pic}(X)$ and $\text{Br}(X)$ are the Picard and Grothendieck-Brauer groups of $X$ respectively. In fact bi-degree $(n,1)$ by \cite[Corollary 3.4.3]{V} there exists an isomorphism $\text{H}_M^n(X,\Z(1)) \simeq  \text{H}_\text{Zar}^{n-1}(X,\mathbb{G}_m)$ because the quasi-isomorphism $\Z_X(1)\sim \mathbb{G}_m[-1]$ also holds in Zariski topology. As a particular case consider $\text{H}_M^3(X,\Z(1)) \simeq  \text{H}_\text{Zar}^2(X,\mathbb{G}_m)=0$ because $\text{H}_M^m(X,\Z(n))=0$ if $m>2n$ where the Grothendieck-Brauer group of $X$ is not always zero (for instance consider $X$ an Enriques surface).

In bi-degree $(4,2)$ the comparison map is known to be injective but in general not surjective; we have a short exact sequence
\begin{align*}
0 \to \text{CH}^2(X)\xrightarrow{\kappa^2}\text{CH}^2_L(X)\to \text{H}^3_{\text{nr}}(X,\Q/\Z(2))\to 0,
\end{align*}
where $\mathcal{H}^3_\et(\Q/\Z(2))$ is the Zariski sheaf associated to $U\mapsto \text{H}_\et(U,\Q/\Z(2))$ and unramified part is the global section $\text{H}_\text{nr}^3(X,\Q/\Z(2))=\Gamma(X,\mathcal{H}^3_\et(\Q/\Z(2)))$, for a proof we refer to \cite[Proposition 2.9]{Kahn}. If $k=\C$ the latter group surjects onto the torsion of the obstruction, in codimension 4, to the integral Hodge conjecture, i.e.
\begin{align*}
    \text{H}_\text{nr}^3(X,\Q/\Z(2)) \twoheadrightarrow \left(\text{Hdg}^4(X,\Z)/\text{im}\left\{c^2:\CH^2(X)\to H^4_B(X,\Z(2))\right\}\right)_\text{tors}
\end{align*}
and then in general is not zero and the comparison map $\kappa^2$ is not surjective, for more details see  \cite[Th\'eor\`eme 3.7]{CTV}.

\begin{remark}
The adjunction formula for rational coefficients, the morphism
$\Q_X(n)\to R \pi_* \Q_X(n)_\et$ turns out to be an isomorphism (see {\cite[Th\'eor\`eme 2.6]{Kahn}}), thus $\text{H}_M^m(X,\Q(n)) \simeq \text{H}_L^m(X,\Q(n))$ for all $(m,n)\in \Z^2$.
\end{remark}

If $R$ is torsion then we can compute the Lichtenbaum cohomology as an \'etale cohomology. To be more precise for a prime number $\ell$, $r\in \mathbb{N}\geq 1$ and $R=\Z/\ell^r$ then we have the following quasi-isomorphisms
\begin{align*}
    (\Z/\ell^r)_X(n)_\et \xrightarrow{\sim} \begin{cases}
      \mu_{\ell^r}^{\otimes n} &\text{if char}(k)\neq \ell \\
      \nu_r(n)[-n] &\text{if char}(k)= \ell
    \end{cases}
\end{align*}
where $\nu_r(n)$ is the logarithmic Rham-Witt sheaf. After passing to direct limit we have also quasi-isomorphisms
\begin{align*}
    (\Q_\ell/\Z_\ell)_X(n)_\et \xrightarrow{\sim} \begin{cases}
      \varinjlim_{r}\mu_{\ell^r}^{\otimes n} &\text{if char}(k)\neq \ell \\
      \varinjlim_{r}\nu_r(n)[-n] &\text{if char}(k)= \ell
    \end{cases}
\end{align*}
and finally set $ (\Q/\Z)_X(n)_\et =\bigoplus (\Q_\ell/\Z_\ell)_X(n)_\et\xrightarrow{\sim } \Q/\Z(n)_\et$. The following result is a well known fact, known as the \textit{Suslin rigidity theorem}, about the morphism $\Z_X(n)\to R\pi_* \Z_X(n)_\et$ for $n\geq \text{dim}(X)$ over $k=\bar{k}$.

\begin{prop}{\cite[Theorem 4.2]{V},\cite[Section 2]{Geis}}\label{lemGe}
Let $X$ be a smooth projective variety of dimension $d$ over an algebraically closed field $k$. Then for $n\geq d$ the canonical map $\pi:X_\et \to X_\text{Zar}$ the induced morphism between complexes of Zariski sheaves $\Z_X(n)\to R\pi_* \Z_X(n)_\et$ is a quasi-isomorphism.
\end{prop}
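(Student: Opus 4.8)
The plan is to show that the cone $C$ of the comparison morphism $\Z_X(n) \to R\pi_* \Z_X(n)_\et$ is acyclic by separating its rational and torsion parts. First I would observe that after tensoring with $\Q$ the morphism becomes $\Q_X(n) \to R\pi_* \Q_X(n)_\et$, which is a quasi-isomorphism by the rational comparison recalled in the remark above. Since $-\otimes\Q$ is exact and commutes with $R\pi_*$, this gives $C \otimes \Q \simeq 0$, so every cohomology sheaf $\mathcal{H}^i(C)$ is torsion. To conclude $C \simeq 0$ it then suffices to prove $C \otimes^{\mathbf L} \Z/\ell^r \simeq 0$ for every prime $\ell$ and every $r \geq 1$: the distinguished triangle $C \xrightarrow{\ell^r} C \to C \otimes^{\mathbf L}\Z/\ell^r$ would force multiplication by $\ell^r$ to be an isomorphism on each $\mathcal H^i(C)$, and a torsion group on which every prime acts invertibly is zero.

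Next I would compute $C \otimes^{\mathbf L}\Z/\ell^r$. Because $R\pi_*$ commutes with $-\otimes^{\mathbf L}\Z/\ell^r$ and $\Z_X(n) \otimes^{\mathbf L}\Z/\ell^r = \Z/\ell^r_X(n)$, there is a distinguished triangle $\Z/\ell^r_X(n) \to R\pi_* \Z/\ell^r_X(n)_\et \to C \otimes^{\mathbf L}\Z/\ell^r$. Assume first $\ell \neq \text{char}(k)$. The rigidity quasi-isomorphism $\Z/\ell^r_X(n)_\et \simeq \mu_{\ell^r}^{\otimes n}$ recalled earlier identifies the middle term with $R\pi_* \mu_{\ell^r}^{\otimes n}$, while the Beilinson--Lichtenbaum theorem (the norm residue theorem of Voevodsky--Rost, together with the vanishing $\mathcal H^m(\Z/\ell^r_X(n)) = 0$ for $m > n$) identifies the left term with the truncation $\tau_{\leq n} R\pi_* \mu_{\ell^r}^{\otimes n}$. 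Hence $C \otimes^{\mathbf L}\Z/\ell^r \simeq \tau_{> n} R\pi_* \mu_{\ell^r}^{\otimes n}$, whose cohomology sheaves are the Zariski sheaves $\mathcal H^q_\et(\mu_{\ell^r}^{\otimes n})$ for $q \geq n+1$.

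The crucial point, and the place where the hypothesis $n \geq d$ enters, is the vanishing of these sheaves. By the Bloch--Ogus--Gabber Gersten resolution, $\mathcal H^q_\et(\mu_{\ell^r}^{\otimes n})$ is resolved by a complex whose term in codimension $c$ is built from the Galois cohomology groups $H^{q-c}_\et(k(x), \mu_{\ell^r}^{\otimes(n-c)})$ of the residue fields $k(x)$ at points $x$ of codimension $c$. Since $k = \bar k$, each $k(x)$ has transcendence degree $d - c$ over $k$, hence $\text{cd}_\ell(k(x)) \leq d - c$, so $H^{q-c}_\et(k(x), -) = 0$ as soon as $q - c > d - c$, i.e. $q > d$. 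Thus $\mathcal H^q_\et(\mu_{\ell^r}^{\otimes n}) = 0$ for all $q > d$; as $n \geq d$ every index in the tail satisfies $q \geq n+1 > d$, so $C \otimes^{\mathbf L}\Z/\ell^r \simeq 0$.

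It remains to treat the case $\ell = \text{char}(k) = p$, which I expect to be the main obstacle. Here the rigidity isomorphism is replaced by $\Z/p^r_X(n)_\et \simeq \nu_r(n)[-n]$ with $\nu_r(n)$ the logarithmic de Rham--Witt sheaf, and I would run the same truncation argument using the Gersten resolution for $\nu_r(n)$ (Gros--Suwa) together with the vanishing of its residue-field cohomology in the relevant range; this is precisely the content of the Geisser--Levine description of $p$-adic motivic cohomology and forces the $p$-primary part of $C$ to vanish. Combining both cases gives $C \otimes^{\mathbf L}\Z/\ell^r \simeq 0$ for every $\ell$ and $r$, and together with $C \otimes \Q \simeq 0$ this yields $C \simeq 0$, proving that $\Z_X(n) \to R\pi_* \Z_X(n)_\et$ is a quasi-isomorphism. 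The characteristic-$p$ comparison is the delicate step, since it cannot be read off from \'etale rigidity and requires the finer logarithmic de Rham--Witt machinery.
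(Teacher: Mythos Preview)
Your argument is correct and takes a genuinely different route from the paper. The paper reduces to torsion coefficients as you do, but then proceeds quite differently: for $\ell\neq p$ it simply invokes Suslin's rigidity theorem (which compares motivic and \'etale cohomology with finite coefficients directly, without passing through Beilinson--Lichtenbaum), and for $\ell=p$ it works at the level of cohomology groups rather than sheaves, using Geisser's arithmetic duality pairing $\text{Ext}^{1-m}(\mathcal F,\Z_X(d)[2d])\times H^m_c(X_\et,\Z/l)\to\Q/\Z$ together with Poincar\'e duality for \'etale cohomology to obtain $H^{2d-m}_M(X,\Z/l(d))\simeq H^{2d-m}_\et(X,\Z/l(d))$ in the case $n=d$, and then extends to $n\geq d$ by homotopy invariance of higher Chow groups via the product $X\times\mathbb A^{n-d}$.

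Your approach is more sheaf-theoretic and uniform: you identify the mod-$\ell^r$ cone as the tail $\tau_{>n}R\pi_*\mu_{\ell^r}^{\otimes n}$ via Beilinson--Lichtenbaum and kill it using the Gersten resolution plus the cohomological-dimension bound $\text{cd}_\ell(k(x))\leq d-c$ for residue fields over $\bar k$, with the analogous Geisser--Levine/Gros--Suwa input at $p$. This is clean and treats all $n\geq d$ at once, but it invokes the full norm residue theorem, which is a heavier input than Suslin's original rigidity argument. The paper's duality-plus-homotopy-invariance proof is lighter on foundations for the $\ell\neq p$ part and arguably more self-contained at $p$, at the price of a two-step reduction ($n=d$ first, then $n>d$). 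Both reach the same conclusion; your write-up would benefit from noting that Beilinson--Lichtenbaum is not strictly necessary for $\ell\neq p$, since Suslin's earlier rigidity result already suffices in the range $n\geq d$.
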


\begin{proof}
Having that $\Q_X(n)\to R\pi_*\Q_X(n)_\et$ is a quasi-isomorphism for all $n\in \mathbb{N}$, then we only have to focus on torsion coefficients. In characteristic zero this was already proved by Suslin in \cite[Prop. 4.1, Thm. 4.2]{V}, an in general away from the characteristic of the field $k$. 

For the general case, assume that $n=d$ and $l \in\mathbb{N}$. If $k$ has positive characteristic then by \cite[Lemma 2.4]{Ge10} for a constructible sheaf $\mathcal{F}$ we have that $R\ho(\mathcal{F},\Z/l(d)[2d])[-1]\cong R\ho(\mathcal{F},\Z(d)[2d])$ and also there exists a perfect pairing of finite groups $\text{Ext}^{1-m}(\mathcal{F},\Z_X(d)[2d])\times H^m_c(X_\et,\Z/l))\to \Q/\Z$ so this gives us an isomorphism $H^{2d-m}_M(X,\Z/l(d))^*\simeq H^m_c(X_\et,\Z/l)$. Since $X$ is smooth, Poincar\'e duality holds for \'etale cohomology, see \cite[Chapter VI \S 11]{Mil80}, then $H^m_c(X_\et,\Z/l))^*\simeq H^{2d-m}_\et(X,\Z/l(d))$ and therefore we obtain the isomorphisms $H^{2d-m}_M(X,\Z/l(d))\simeq H^{2d-m}_\et(X,\Z/l(d)) \simeq H^{2d-m}_L(X,\Z/l(d))$. As in \cite[Theorem. 4.2]{V} for a general $n\geq d$ we use the homotopy invariance of the higher Chow groups
\begin{align*}
    H^{2d-m}_M(X,\Z/l(d)) &\simeq H^{2d-m}_M(X\times \mathbb{A}_k^n,\Z/l(d))\\ 
    &\simeq H^m_c(X\times \mathbb{A}_k^n,\Z/l)^* \\
    &\simeq H^{m-2(n-d)}_c(X,\Z/l(d-n))^*.
\end{align*}
To conclude, we have a quasi-isomorphism $\left(\Z/l\right)_X(n) \to R\pi_* \left(\Z/l\right)_X(n)_\et$ for all $l \in \mathbb{N}$ therefore $\left(\Q/\Z\right)_X(n) \to R\pi_* \left(\Q/\Z\right)_X(n)_\et$ as well. Thus from the commutative diagram 
\begin{equation*}
  \begin{tikzcd}[column sep=2em]
   \arrow{r} &H^{m-1}_M(X,\Q/\Z(n))\arrow{d}{\simeq}  \arrow{r} & H^m_M(X,\Z(n)) \arrow{r} \arrow{d}{} & H^m_M(X,\Q(n)) \arrow{r}  \arrow{d}{\simeq } & H^m_M(X,\Q/\Z(n)) \arrow{d}{\simeq} \to  \\
  \arrow{r} &H^{m-1}_L(X,\Q/\Z(n))  \arrow{r} & H^m_L(X,\Z(n)) \arrow{r} & H^m_L(X,\Q(n))\arrow{r}   & H^m_L(X,\Q/\Z(n)) \to
  \end{tikzcd}
\end{equation*}
we then conclude that $H^m_M(X,\Z(n))\simeq H^m_L(X,\Z(n))$.
\end{proof}

\subsection{Category of \'etale Chow motives}

\subsubsection{Correspondences}
Proceeding in a similar way as in the case of pure motives, we need to introduce the concept of correspondences which have a main role in the definition of the morphisms in the category of \'etale Chow motives. For this construction we will use the notion of \'etale Chow groups, but always keeping in mind the following: Consider a field $k$ of characteristic exponent $p$ and a smooth $k$-scheme $X$ which is of finite type. For all bi-degree $(m,n) \in \Z^2$ there exists a map $    \rho^{m,n}_X: H^m_L(X,\Z(n)) \to H^m_{M,\et}(X,\Z(n))$ which is induced by the $\mathbb{A}^1$-localization functor of effective \'etale motivic sheaves. If we tensor by $\Z[1/p]$, then $\rho^{m,n}_X$ becomes an isomorphism, for a proof we refer to \cite[Theorem 7.1.2]{CD16}. In particular the two definitions coincide in characteristic zero.

\begin{defi}
Let $X$ and $Y$ be smooth projective varieties. An \'etale correspondence from $X$ to $Y$ of degree $r$ is defined as follows: if $X$ is purely of codimension $d$
\begin{align*}
\text{Corr}_\et^r(X,Y)=\text{CH}_\et^{r+d}(X\times Y).
\end{align*}
For the general case
\begin{align*}
\text{Corr}_\et^r(X,Y)= \bigoplus_{i=1}^n \text{CH}_\et^{r+d_i}(X_i\times Y)
\end{align*}
where $X=\coprod_{i=1}^n X_i$ and $d_i$ is the dimension of $X_i$.
\end{defi}

For $\alpha \in \text{Corr}_\et^r(X,Y)$ and $ \beta \in \text{Corr}_\et^s(Y,Z)$ we define the composition $\beta \circ \alpha \in \text{Corr}_\et^{r+s}(X,Z)$ of correspondences as 
\begin{align*}
\beta \circ \alpha = \left(\text{pr}_{13}\right)_*\left(\text{pr}^*_{12}\alpha \cdot \text{pr}^*_{23}\beta\right)
\end{align*}
where $\text{pr}_{12}:X\times_k  Y \times_k Z \to X \times_k Y$ (similar definition for $\text{pr}_{23}$ and $\text{pr}_{13}$ with the respective change in the projection's components). 
\begin{prop}
The composition of correspondences is an associative operation.
\end{prop}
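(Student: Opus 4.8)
The plan is to follow the classical argument for associativity of the composition of correspondences, now carried out inside the étale Chow groups $\CH_\et^\bullet$. The entire computation takes place on the fourfold product $X\times Y\times Z\times W$; numbering the factors $1=X$, $2=Y$, $3=Z$, $4=W$ and writing $\text{pr}_{ij}$, $\text{pr}_{ijk}$ for the projections from this fourfold product onto the indicated factors, the goal is to show that for $\alpha\in\text{Corr}_\et^r(X,Y)$, $\beta\in\text{Corr}_\et^s(Y,Z)$ and $\gamma\in\text{Corr}_\et^t(Z,W)$ both bracketings $(\gamma\circ\beta)\circ\alpha$ and $\gamma\circ(\beta\circ\alpha)$ coincide with the single symmetric expression
\[
(\text{pr}_{14})_*\left(\text{pr}_{12}^*\alpha\cdot\text{pr}_{23}^*\beta\cdot\text{pr}_{34}^*\gamma\right).
\]
Once both sides are reduced to this common form, associativity is immediate.

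First I would record the formal properties of $f^*$, $f_*$ and the intersection product on $\CH_\et^\bullet$ that the reduction requires: functoriality $(gf)^*=f^*g^*$ and $(gf)_*=g_*f_*$, compatibility of $f^*$ with the product, the projection formula $f_*(f^*x\cdot y)=x\cdot f_*y$, and the base change identity $g^*f_*=f'_*g'^*$ for the Cartesian squares formed by the relevant projections. As explained in the proof of the preceding proposition on the comparison map $\sigma^{m,n}$, all of these arise from the covariant functoriality of $M_\et(-)$ together with the Gysin morphisms in $\text{DM}_\et(k)$, so that each operation on $\CH_\et^\bullet$ is available with exactly the same formal behaviour as in the Nisnevich case.

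Next I would expand one bracketing, say $\gamma\circ(\beta\circ\alpha)$. Starting from $\beta\circ\alpha=(\text{pr}_{13})_*(\text{pr}_{12}^*\alpha\cdot\text{pr}_{23}^*\beta)$ on $X\times Z$ (here the projections are those of $X\times Y\times Z$), I would pull everything back to the fourfold product and push down to $X\times W$, using base change to commute the pullback along $X\times Z\times W\to X\times Z$ past the pushforward coming from $X\times Y\times Z\to X\times Z$, then the projection formula to absorb $\text{pr}_{34}^*\gamma$ under the remaining pushforward, and finally the functoriality of $f_*$ to collapse two successive pushforwards into $(\text{pr}_{14})_*$. A symmetric sequence of manipulations turns $(\gamma\circ\beta)\circ\alpha$ into the same symmetric expression, and the two agree.

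The main obstacle is not the bookkeeping but securing the base change and projection formulae for $\CH_\et^\bullet$, since the proposition on $\sigma^{m,n}$ asserts only that the comparison map \emph{respects} these structures rather than listing the identities themselves. These compatibilities hold because $f_*$ is precisely the Gysin map attached to the functoriality of $M_\et(-)$ and the intersection product is induced by the diagonal together with the monoidal structure of $\rho^*$; the required square-commutations are then instances of the standard relations in the six-functor formalism for $\text{DM}_\et(k)$, which transport to $\CH_\et^\bullet$ through the representability $\CH_\et^n(X)=\ho_{\text{DM}_\et(k)}(M_\et(X),\Z(n)[2n])$. Degrees match automatically: with $X$, $Y$, $Z$ of the respective dimensions, each intermediate class lands in the expected $\CH_\et$ group, so the final class sits in $\text{Corr}_\et^{r+s+t}(X,W)$, as required.
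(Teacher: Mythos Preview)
Your proposal is correct and follows essentially the same approach as the paper: establish the base change identity $g^*f_*=f'_*g'^*$ for the relevant Cartesian squares of projections via the Gysin morphism formalism in $\text{DM}_\et(k)$, and then run the classical Fulton argument on the fourfold product. The paper simply cites \cite[Proposition 5.17]{DegII} for base change and then defers to \cite[16.1.1.(a)]{Fulton}, whereas you spell out the symmetric expression $(\text{pr}_{14})_*(\text{pr}_{12}^*\alpha\cdot\text{pr}_{23}^*\beta\cdot\text{pr}_{34}^*\gamma)$ and the use of the projection formula explicitly, but the substance is the same.
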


\begin{proof}

 To see that this operation is associative, we recall Gysin morphism for \'etale motives. Consider $X$, $Y$ and $S$ smooth schemes over $k$ such that there exists a cartesian square of smooth schemes
    \begin{equation}
\begin{tikzcd}
X \times_S Y \arrow{r}{q} \arrow{d}{g}& Y \arrow{d}{f} \\
X \arrow{r}{p} & S
\end{tikzcd}
\end{equation} 
 with $p$ and $q$ are projective morphism and $\text{dim}(X/S)=\text{dim}(X\times_S Y/Y)$, thus by \cite[Proposition 5.17]{DegII} we have the following commutative diagrams
      \begin{equation}
\begin{tikzcd}
M(X \times_S Y)(-n)[-2n]  \arrow{d}{g_*}& M(Y) \arrow{l}{q^*} \arrow{d}{f_*} & & \CH_\et^{i+n}(X \times_S Y) \arrow{r}{q_*}& \CH_\et^{i}(Y) \\
M(X)(-n)[-2n]  & \arrow{l}{p^*} M(S) & & \CH_\et^{i+n}(X) \arrow{u}{g^*}\arrow{r}{p_*}& \CH_\et^{i}(S) \arrow{u}{f^*}
\end{tikzcd}
\end{equation} 
where $n=\text{dim}(X/S)$.\label{rem1} 

Consider the following commutative diagram
\[
\begin{tikzcd}
X\times Y \times Z \times W \arrow{r}{\text{pr}^{XYZW}_{XYZ}} \arrow{d}{\text{pr}^{XYZW}_{XZW}}& X\times Y \times Z \arrow{d}{\text{pr}^{XYZ}_{XZ}} \\
X\times Z \times W \arrow{r}{\text{pr}^{XZW}_{XZ}} & X\times Z
\end{tikzcd}
\]
by (2) we have that $\left(\text{pr}^{XYZW}_{XYZ} \right)_*\left(\text{pr}^{XYZW}_{XZW}\right)^*= \left(\text{pr}^{XYZ}_{XZ}\right)^* \left( \text{pr}^{XZW}_{XZ}\right)_*$, so the rest of the proof is similar to the proof of \cite[16.1.1.(a)]{Fulton}.
\end{proof}

\begin{remark}
\begin{itemize}
    \item The composition of correspondences on $\text{Corr}_\et(X,X)$ induces a binary operation which gives to it a ring structure. In general it is not a commutative operation.
    \item Let $X$ be a smooth projective scheme of dimension n, then the \'etale cycle $\Delta^\et_X$, induced by the diagonal, is the identity for the composition operation, i.e. for $\alpha \in \text{Corr}_\et^r(X,Y) $ and $\beta \in \text{Corr}_\et^r(Y,X)$ we obtain that $\alpha \circ \Delta^\et_X = \alpha$ and $\Delta^\et_X\circ\beta=\beta$.
\end{itemize}
\end{remark}

\subsubsection{Operation of correspondences}

It is possible to define the addition and product of correspondences in the following way: suppose that $\alpha \in \text{Corr}_\et(X,X)$ and $\beta \in \text{Corr}_\et(Y,Y)$, then we define the element $\alpha + \beta$ as the element resulting from the following operation on cycles:
\begin{align*}
 \text{CH}_{\et}(X\times X)\oplus \text{CH}_{\et}(Y\times Y) &\hookrightarrow \text{CH}_{\et}\left(\left(X\coprod Y\right)\times\left(X\coprod Y\right)\right)\\
(\alpha,\beta) &\mapsto (i_1)_*\alpha + (i_2)_*\beta
\end{align*}
where $i_1:X\times X \hookrightarrow \left(X \coprod Y\right)\times \left(X \coprod Y\right)$ is the usual closed immersion map (similar definition for $i_2$ and $Y$). In a similar way we define the tensor product of correspondences as
\begin{align*}
\text{CH}_{\et}(X\times X)\otimes \text{CH}_{\et}(Y\times Y) &\to \text{CH}_\et(X\times Y \times X \times Y)\\
(\alpha,\beta) &\mapsto \text{pr}_{XX}^* \alpha \cdot \text{pr}_{YY}^* \beta
\end{align*}
where $\text{pr}_{XX}:X\times Y \times X \times Y \to X\times X$, similar definition for $\text{pr}_{YY}$. Both of this structures will play a big role for the definition of operations in the category of Chow \'etale motives. Another important operation is the transposition of cycles

\begin{defi}
Let $X$ and $Y$ be smooth projective varieties and let $\tau:X\times Y \to Y\times X$ which permutes the components $(x,y)\mapsto(y,x)$. Let $\Gamma \in \text{CH}_\et^n(X\times Y)$, we define the transpose cycle as $\Gamma^t := \tau_*(\Gamma)$.
\end{defi}

\subsubsection{Action on cycles and cohomology groups}

Let $X$ and $Y$ be smooth projective varieties. For a correspondence $\Gamma \in \text{Corr}_\et^r(X,Y)$ we define the action $\Gamma_*:\text{CH}^i_\et(X)\to \text{CH}^{i+r}_\et(Y)$ as
\begin{align*}
\Gamma_* Z =\text{pr}_{Y*}\left(\Gamma \cdot \text{pr}^*_X(Z)\right) \in \text{CH}^{i+r}_\et(Y)
\end{align*}
for $Z \in \text{CH}^i_\et(X)$. Here arises the necessity of working with \'etale Chow groups, because of its functoriality properties for proper maps, instead of Lichtenbaum cohomology. In order to use an action considering Lichtenbaum cohomology, it would be necessary to invert the exponential characteristic of the base field. 


Let $\Gamma  \in \text{Corr}_L^r(X,Y)$ be a correspondence of degree $r$, then we have an operation on Betti (with integral coefficients) and $\ell$-adic cohomology $\Gamma_*:H^i(X)\to H^{i+2r}(Y)$ defined as in the following expression
\begin{align*}
\Gamma_* z:= \text{pr}_{Y*}\left(c_L(\Gamma)\cup \text{pr}_X^*(z)\right)
\end{align*}
with $z \in H^i(X)$. As we will see in the following sections and subsections this notion of actions will be the cornerstone to have a well defined Hodge conjecture and generalized Hodge conjecture in the Lichtenbaum setting.

\subsection{\'Etale Chow motives}

Let $\text{SmProj}_k$ be the category of smooth projective varieties over $k$. We construct the category of effective Lichtenbaum motives over $k$, denoted by $\chef_\et(k)$, as follows:
\begin{itemize}
\item The elements are tuples $(X,p)$ where $X$ is a smooth projective variety and $p \in \text{Corr}_\et^0(X,X)$ is an idempotent element, i.e. $p\circ p =0$.
\item Morphism $(X,p)\to (Y,q)$ are the elements of the form $f=q \circ g \circ p$ where $g \in \text{Corr}_\et^0(X,Y)$, therefore
\begin{align*}
\text{Hom}_{\chef_\et(k)}\left((X,p),(Y,q)\right) = q \circ \text{Corr}_\et^0(X,Y) \circ p
\end{align*}
\end{itemize}

Finally, the category $\ch_\et(k)$ of Chow \'etale motives is defined in the following way: the objects are triplets $(X,p,m)$ where $X$ is a smooth projective variety, $p$ is a correspondence of degree 0 and idempotent and $m \in \Z$. The morphisms $(X,p,m)\to (Y,q,n)$ are defined as
\begin{align*}
\text{Hom}_{\ch_\et(k)}\left((X,p,m),(Y,q,n)\right) = q \circ \text{Corr}_\et^{n-m}(X,Y) \circ p
\end{align*}

As in the theory of Chow motives, for \'etale motives there is an obvious fully faithful functor $\ch_\et^{\eff}(k)\hookrightarrow\ch_\et(k)$.

We define a functor $h_\et:\text{SmProj}_k^{\text{op}}\to \ch_\et(k)$ as
\begin{align*}
h_\et:\text{SmProj}_k &\to \ch_\et(k)\\
X&\mapsto h_\et(X):=(X,\text{id}_X,0)\\
\left( X\xrightarrow{f}Y \right) &\mapsto \left(h_\et(Y)\xrightarrow{h_\et(f)}h_\et(X) \right) 
\end{align*}
where $\text{id}_X$ is the element that acts as the identity on the correspondences from $X$ to itself and $h_\et(f)=\kappa([\Gamma_f^t])$. 

\begin{remark}
Let us remark that there was another construction of a category of Chow étale motives, which here we denote $\ch_\et^K(k)$, given in \cite[\S 5]{kahn2002} by Kahn. This category is pseudo-abelian and rigid symmetric monoidal. The definition of such (effective) category is similar as the one we gave, but just considering elements $(X,p)$ where $p=p^2 \in \text{Corr}_\et(X,X)\otimes \Q$ and morphisms between $(X,p)$ and $(Y,q)$ are correspondences $f \in \text{Corr}_\et(X,Y)$ such that $f \otimes \Q = q \circ \tilde{f}\circ p =\tilde{f} \in \text{Corr}_\et(X,Y)\otimes \Q$.
\end{remark} 

\begin{prop}
Similar to the theory of pure Chow motives, there exists a fully faithful embedding functor $F:\ch_\et(k)^{\text{op}}\hookrightarrow \text{DM}_\et(k)$
\end{prop}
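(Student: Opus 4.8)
The plan is to transport the classical construction of the embedding $\ch(k)^{\mathrm{op}}\hookrightarrow\text{DM}(k)$ (see \cite{MNP}, \cite{kahn2002}) to the \'etale setting, replacing Chow groups by \'etale Chow groups throughout. First I would define $F$ on objects. Since the (non-constructible) category $\text{DM}_\et(k)$ admits arbitrary coproducts it is idempotent complete, so every projector splits; alternatively one restricts to the idempotent-complete subcategory of geometric motives. Given an idempotent $p\in\text{Corr}_\et^0(X,X)$, I would set $F(X,p,m)$ to be the image of the endomorphism of $M_\et(X)(-m)[-2m]$ induced by $p$ through the morphism identification below (the sign of the twist being fixed so that the morphism groups match), so that $h_\et(X)=(X,\text{id}_X,0)$ is sent to $M_\et(X)$. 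On morphisms, $F$ sends a class $g$ representing an \'etale correspondence to the morphism of $\text{DM}_\et(k)$ corresponding to it and then conjugates by the split projectors; the whole statement then reduces to proving that this identification is an isomorphism compatible with composition.

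The heart of the argument is the computation, for smooth projective $X,Y$ of dimensions $d_X,d_Y$,
\begin{align*}
\text{Hom}_{\text{DM}_\et(k)}\left(M_\et(X),M_\et(Y)(r)[2r]\right)\cong \CH_\et^{\,d_Y+r}(X\times Y)=\text{Corr}_\et^r(Y,X),
\end{align*}
which is exactly why the functor is contravariant and why the transpose appears in $h_\et(f)=\kappa([\Gamma_f^t])$. To prove it I would use dualizability: the motive of a smooth projective variety is strongly dualizable in $\text{DM}(k)$ with dual $M(X)^\vee\cong M(X)(-d_X)[-2d_X]$, and since $\rho^*$ is monoidal and triangulated it carries the duality data to a duality $M_\et(X)^\vee\cong M_\et(X)(-d_X)[-2d_X]$ in $\text{DM}_\et(k)$, while $M_\et(X)\otimes M_\et(Y)\cong M_\et(X\times Y)$ (see \cite{CD16}). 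A chain of duality and adjunction isomorphisms then gives
\begin{align*}
\text{Hom}_{\text{DM}_\et(k)}\left(M_\et(X),M_\et(Y)(r)[2r]\right)&\cong \text{Hom}_{\text{DM}_\et(k)}\left(\Z,M_\et(X\times Y)(r-d_X)[2r-2d_X]\right)\\
&\cong H^{2(r+d_Y)}_{M,\et}\left(X\times Y,\Z(r+d_Y)\right),
\end{align*}
where the last step applies duality on $X\times Y$; this is the \'etale analogue of the classical computation and identifies the Hom group with the \'etale Chow group $\CH_\et^{\,r+d_Y}(X\times Y)$, i.e. with $\text{Corr}_\et^r(Y,X)$.

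Next I would verify that $F$ is a functor, i.e. that it respects the composition law $\beta\circ\alpha=(\text{pr}_{13})_*(\text{pr}_{12}^*\alpha\cdot\text{pr}_{23}^*\beta)$. The composition of correspondences is assembled from pullbacks, intersection products and proper pushforwards, and the duality isomorphism of the previous step intertwines each of these with the categorical composition in $\text{DM}_\et(k)$. The necessary inputs are the monoidality of $\rho^*$ and the compatibility of Gysin morphisms with $\rho^*$ and with base change already recorded above and in \cite{DegII}; granting these, the diagram chase proving associativity and compatibility is formally identical to the classical one (cf. \cite{Fulton} and \cite{MNP}), and the diagonal correspondence is sent to the identity. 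Once functoriality is in place, full faithfulness is immediate from the Hom computation: for general objects one has
\begin{align*}
\text{Hom}_{\text{DM}_\et(k)}\left(F(X,p,m),F(Y,q,n)\right)=F(q)\circ\text{Hom}_{\text{DM}_\et(k)}\left(M_\et(X)(-m)[-2m],M_\et(Y)(-n)[-2n]\right)\circ F(p),
\end{align*}
which matches $p\circ\text{Corr}_\et^{\,m-n}(Y,X)\circ q=\text{Hom}_{\ch_\et(k)^{\mathrm{op}}}\left((X,p,m),(Y,q,n)\right)$.

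I expect the main obstacle to be precisely the compatibility in the third step: showing that the abstract composition coming from the evaluation and coevaluation maps of the duality on $M_\et(X)$ coincides with the geometric formula for $\beta\circ\alpha$. This is the technical crux because it requires tracking the six-functor and Gysin compatibilities carefully through the duality isomorphism of the second step; by contrast, the dualizability and the Hom computation are formal consequences of the monoidality of $\rho^*$ together with the corresponding facts in $\text{DM}(k)$, and the idempotent splitting used to define $F$ on objects is automatic.
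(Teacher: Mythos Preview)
Your proposal is correct and follows essentially the same route as the paper: compute $\text{Hom}_{\text{DM}_\et(k)}(M_\et(X),M_\et(Y)(r)[2r])$ via strong dualizability of motives of smooth projective varieties (the paper invokes the argument of \cite[Proposition~20.1]{MVW} for this), then verify compatibility with the composition of correspondences and extend to the pseudo-abelian envelope. The only difference is one of presentation: where you spell out the duality chain and propose to check the composition compatibility directly from the Gysin and base-change squares established earlier, the paper simply cites \cite[Theorem~3.17 and Proposition~2.39]{Jin} for that step; your identification of this compatibility as the technical crux is exactly right.
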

\begin{proof}
Let $X,Y,Z \in \text{SmProj}_k$. The map $\epsilon_{X,Y}: \text{Hom}_{\text{DM}_\et(k)}(M(X),M(Y))\xrightarrow{\simeq}\text{Corr}_{\et}^0(Y,X)$ is an isomorphism, which can be obtained with the same arguments as in \cite[Proposition 20.1]{MVW}. The compatibility with respect composition is obtained as in \cite[Theorem 3.17]{Jin} using \cite[Proposition 2.39]{Jin}.
\end{proof}

\section{Hodge and Generalized Hodge conjecture}
\subsection{Hodge conjecture and Lichtenbaum cohomology}

Fix an integer $k \in \Z$. An integral pure Hodge structure $H$ of weight $k$ is a finitely generated $\Z$-module such that $H\otimes \C=\bigoplus_{p+q=k} H^{p,q}$ where $H^{p,q}$ is a complex vector space with $H^{q,p}=\overline{H^{p,q}}$. For $m \in \Z$ we denote as $\Z(m)$ the Tate Hodge structure of weight $-2m$ whose Hodge decomposition is concentrated in bi-degree $(-m,-m)$. For a  pure Hodge structure $H$ of weight $k$ its Tate twist $H(m)$ is defined to be the tensor product $H \otimes_\Z \Z(m)$ which is a Hodge structure of weight $k-2m$ and its decomposition is
\begin{align*}
    H(m)\otimes_\Z\C= \bigoplus_{p+q=k-2m}H(m)^{p,q}=\bigoplus_{p+q=k-2m}H^{p-m,q-m}
\end{align*}

If $X$ is a complex smooth projective variety we denote by $\text{Hdg}^{2n}(X,\Z)$ the Hodge classes of $X$ of weight $2n$, defined as 
\begin{align*}
    \text{Hdg}^{2n}(X,\Z) := \left\{ \alpha \in H^{2n}_B(X,\Z(n)) \ \Big| \ \rho(\alpha) \in F^n H^{2n}(X,\C)\right\}
\end{align*}
where $\rho: H^{2n}_B(X,\Z)\to  H^{2n}(X,\C)$ and $F^p  H^{2n}(X,\C)= \bigoplus_{i \geq p} H^{i,2n-i}(X)$. Note that by definition $H^{2n}_B(X,\Z)_\text{tors} \subset \text{Hdg}^{2n}(X,\Z)$.
The image of the cycle class map to Betti cohomology $c^n:\CH^n(X) \to H^{2n}_B(X,\Z(n))$ is contained in $\text{Hdg}^{2n}(X,\Z)$.  We denote as $\text{HC}^n(X)$  the following statement:

\begin{conj}[Hodge conjecture with integral coefficients]
For a complex smooth projective variety $X$ and $n \in \mathbb{N}$ the image of the cycle class map $c^n:\CH^n(X)\to H^{2n}_B(X,\Z(n))$ is $\text{Hdg}^{2n}(X,\Z)$.
\end{conj}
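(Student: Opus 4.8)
The plan is to recognize at the outset that the statement $\text{HC}^n(X)$ is a \emph{conjecture}, and indeed one that is false with the integral coefficients as written: as recalled in the introduction, Atiyah--Hirzebruch exhibit a torsion Hodge class that is not algebraic and K\'ollar a non-torsion one. Consequently there can be no proof of the assertion in full generality, and any strategy I propose is bound to break down precisely in codimension $n\geq 2$. What I would actually prove is the conjecture in the classical ranges where it does hold, and then locate the exact mechanism of its failure, since that mechanism is what the remainder of the paper is engineered to bypass.

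For the affirmative cases I would proceed directly. When $n=0$ both groups are canonically $\Z^{\pi_0(X)}$ and $c^0$ is the identity, so surjectivity is immediate; this is the same computation that gives $\text{CH}^0_L(X)=\Z^{\pi_0(X)}$ earlier in the text. When $n=\dim_\C(X)$, surjectivity onto $H^{2n}_B(X,\Z(n))\cong\Z^{\pi_0(X)}$ reduces to the surjectivity of the degree map on zero-cycles, which holds because each component carries a closed point. The only substantive classical case is $n=1$: here I would invoke the Lefschetz $(1,1)$ theorem, identifying $\text{Hdg}^2(X,\Z)$ with the image of $\text{Pic}(X)=H^1(X,\mathcal{O}_X^\ast)$ under $c^1$ via the exponential sequence, the obstruction living in $H^2(X,\mathcal{O}_X)=H^{0,2}(X)$. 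This is consistent with the isomorphism $\text{CH}^1(X)\simeq\text{CH}^1_L(X)=\text{Pic}(X)$ quoted above, so in codimension one the classical and Lichtenbaum pictures coincide and there is nothing to repair.

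The hard part, in fact the \emph{obstruction to there being any proof at all}, is codimension $n\geq 2$, where the cokernel of $c^n$ onto $\text{Hdg}^{2n}(X,\Z)$ is genuinely nonzero. I would expect any attempt to bootstrap inductively or by Lefschetz pencils from the $n=1$ case to fail, because the obstructing classes are invisible to rational cohomology. The excerpt already pinpoints the phenomenon for $n=2$: the unramified group $\text{H}^3_{\text{nr}}(X,\Q/\Z(2))$ surjects onto the torsion of $\text{Hdg}^4(X,\Z)/\text{im}(c^2)$, so $\text{im}(c^2)$ is a proper subgroup whenever this invariant is nonzero. This is exactly where the integral statement must be surrendered in favour of either rational coefficients or the Lichtenbaum replacement $c^k_L$, since passing from $\text{CH}^2(X)$ to $\text{CH}^2_L(X)$ enlarges the source by precisely the factor $\text{H}^3_{\text{nr}}(X,\Q/\Z(2))$ that was missing. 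Thus the honest outcome of my plan is a proof that $\text{HC}^n(X)$ holds for $n\in\{0,1,\dim X\}$ and fails otherwise, and the genuinely provable refinement the paper is aiming at is Proposition \ref{propE}: a type-$(k,k)$ sub-Hodge structure $W\subset H^{2k}_B(X,\Z(k))$ is $L$-algebraic if and only if $W\otimes\Q$ is algebraic.
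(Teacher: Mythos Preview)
Your assessment is correct and matches the paper exactly: the statement is recorded as a \emph{conjecture}, not a theorem, and the paper offers no proof---immediately after stating it the text notes that for $n\neq 0,1,\dim X$ the integral statement is known to fail (Atiyah--Hirzebruch, K\'ollar), exactly as you say. Your discussion of the trivially true cases $n=0,\dim X$ and the Lefschetz $(1,1)$ case $n=1$, together with the identification of $H^3_{\mathrm{nr}}(X,\Q/\Z(2))$ as the codimension-two obstruction, is in full agreement with the surrounding narrative of the paper.
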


If we replace in the conjecture $\Z$ by $\Q$, we will denote as $\text{HC}^n(X)_\Q$. For $n\neq 1$ it is known that the Hodge conjecture (with integral coefficients) does not hold, even if we work with torsion free classes. We define the obstruction to the integral Hodge conjecture as $Z^{2i}(X):=\text{Hdg}^{2i}(X,\Z(i))/\text{im}(c^i)$. In  \cite{AH} it is proved that for every prime number $p$ there exists a smooth variety $X$ such that $Z^{4}(X)[p]\neq 0$.

This conjecture can be stated in terms of motives as well, using the Hodge realization to characterize the validity of the conjecture for the category $\text{SmProj}_\C$
\begin{prop}
Consider $k=\C$ and $\rho_H$ the Hodge realization for $\ch(\C)$ (with rational coefficients), then HC$(X)_\Q$ holds for all $X\in \text{SmProj}$ if and only if $\rho_H$ is a full functor.
\end{prop}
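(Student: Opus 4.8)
The plan is to prove the two implications separately, using the standard dictionary between morphisms of polarizable rational Hodge structures and Hodge classes on products. Throughout I work in the semisimple abelian category of polarizable $\Q$-Hodge structures, and I use repeatedly that a morphism between pure Hodge structures of different weights vanishes, together with the fact that for $X\in\text{SmProj}_\C$ the realization is $\rho_H(h(X))=\bigoplus_i H^i(X,\Q)$ with $H^i(X,\Q)$ pure of weight $i$. First I would reduce to testing fullness on the generating objects $h(X)=(X,\mathrm{id},0)$ and their Tate twists $h(X)(m)=(X,\mathrm{id},m)$: every motive $(X,p,m)$ is a direct summand (retract) of $h(X)(m)$ cut out by $p$, and a routine retract argument shows that surjectivity of $\rho_H$ on $\ho_{\ch(\C)}(h(X)(m),h(Y)(n))$ for all $X,Y,m,n$ implies surjectivity on all Hom-groups, while conversely fullness restricts to the generators. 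Since $\ho_{\ch(\C)}(h(X)(m),h(Y)(n))=\text{Corr}^{n-m}(X,Y)_\Q=\CH^{\dim X+n-m}(X\times Y)_\Q$, the question becomes whether the cycle-class side surjects onto the Hodge-theoretic Hom-groups.

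To prove that fullness of $\rho_H$ implies $\text{HC}^n(X)_\Q$ for all $X$ and $n$, fix a Hodge class $\alpha\in\text{Hdg}^{2n}(X,\Q)$. Untwisting, $\alpha$ is a morphism of Hodge structures $\Q(-n)\to \rho_H(h(X))=\bigoplus_i H^i(X,\Q)$ which, for weight reasons, lands in the summand $H^{2n}(X,\Q)$. Now $\Q(-n)=\rho_H\big((\spc\,\C,\mathrm{id},-n)\big)$ and $\ho_{\ch(\C)}\big((\spc\,\C,\mathrm{id},-n),h(X)\big)=\text{Corr}^{n}(\spc\,\C,X)_\Q=\CH^{n}(X)_\Q$, so fullness provides a cycle $\gamma\in\CH^n(X)_\Q$ with $\rho_H(\gamma)=\alpha$. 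By the compatibility of $\rho_H$ with cycle classes one has $\rho_H(\gamma)=c^n(\gamma)$, whence $\alpha=c^n(\gamma)$ is algebraic and $\text{HC}^n(X)_\Q$ holds.

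For the converse I would make the correspondence--morphism dictionary explicit. For $X,Y\in\text{SmProj}_\C$ with $d=\dim X$, Poincar\'e duality gives $H^i(X,\Q)^\vee\cong H^{2d-i}(X,\Q)(d)$, and combined with the K\"unneth formula it identifies $\bigoplus_i \ho_{\mathrm{HS}}(H^i(X,\Q),H^i(Y,\Q))$ with the group of Hodge classes of type $(d,d)$ in $H^{2d}(X\times Y,\Q(d))$, that is, with $\text{Hdg}^{2d}(X\times Y,\Q)$; the off-diagonal terms $H^i(X)\to H^j(Y)$ with $i\neq j$ vanish for weight reasons. Under this identification the map induced by $\rho_H$ on $\text{Corr}^0(X,Y)_\Q=\CH^{d}(X\times Y)_\Q$ is exactly the cycle-class map into $\text{Hdg}^{2d}(X\times Y,\Q)$. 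Hence the assumed $\text{HC}^{d}(X\times Y)_\Q$ yields surjectivity onto $\ho_{\mathrm{HS}}(\rho_H h(X),\rho_H h(Y))$; incorporating Tate twists only replaces $d$ by $d+n-m$ and the target by Hodge classes in that codimension, all of which are covered by the assumed Hodge conjecture for products. Combined with the reduction step, this gives fullness of $\rho_H$.

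The main obstacle is the verification of the dictionary rather than any single hard estimate: one must check that the Hodge class attached to a morphism of Hodge structures through Poincar\'e duality and K\"unneth is precisely the one realized by the associated correspondence, with matching Tate twists and signs, and that applying the projectors $\rho_H(p),\rho_H(q)$ commutes with $\rho_H$ on Hom-groups so that the reduction to the generators $h(X)$ is legitimate on both the motivic and the Hodge-theoretic sides. Once these compatibilities are in place, the two implications follow formally from the semisimplicity of polarizable Hodge structures and the vanishing of morphisms between different weights.
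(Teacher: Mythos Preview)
The paper states this proposition without proof (it is a well-known folklore result, essentially due to the dictionary between Hodge classes on products and morphisms of polarizable Hodge structures), so there is no paper proof to compare your attempt against. Your argument is correct and is exactly the standard one: reduce to the generators $h(X)(m)$, identify $\ho_{\mathrm{HS}}(\rho_H h(X),\rho_H h(Y))$ with $\text{Hdg}^{2d}(X\times Y,\Q)$ via K\"unneth and Poincar\'e duality, and observe that under this identification the realization map on $\text{Corr}^0(X,Y)_\Q=\CH^d(X\times Y)_\Q$ is the cycle class map, so that fullness on generators is literally $\text{HC}^d(X\times Y)_\Q$; the other direction goes through $\ho_{\ch(\C)}((\spc\,\C,\mathrm{id},-n),h(X))=\CH^n(X)_\Q$. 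The compatibilities you flag as the ``main obstacle'' (that the action of a correspondence on cohomology matches the Hodge-theoretic morphism attached to its cycle class, and that projectors commute with $\rho_H$) are routine and are implicitly used throughout the paper, for instance in the definition of the action of \'etale correspondences on cohomology in Section~2.3.3.
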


Before going into the proof of the equivalences of the weaker version of the equivalence between the Hodge conjecture with rational coefficients and the Lichtenbaum Hodge conjecture let us recall the definitions of Deligne cohomology and intermediate Jacobians and some facts about them. 
Fix an integer $k\geq 0$ and let us denote the $k-$th intermediate Jacobian $J^k(X)$ as the complex torus
\begin{align*}
J^k(X):=H^{2k-1}(X,\C)/(F^kH^{2k-1}(X,\C) \oplus H^{2k-1}(X,\Z)).    
\end{align*}
Consider the Deligne complex $\Z(p)_D$ of a complex manifold $X$ defined as
\begin{align*}
    0 \to \Z(p)\to \mathcal{O}_X\to \Omega^1_X \to \ldots \to  \Omega^{p-1}_X \to 0.
\end{align*}
We then define the \textbf{Deligne cohomology groups} as the hypercohomology groups of the Deligne complex i.e.
\begin{align*}
    H_D^k(X,\Z(p)):=\mathbb{H}^k_\text{an}(X,\Z(p)_D).
\end{align*}
which yields an exact sequence relating Hodge classes and intermediate Jacobians
\begin{align*}
    0 \to  J^k(X) \to H^{2k}_D(X,\Z(k)) \to  \text{Hdg}^{2k}(X,\Z) \to 0.
\end{align*}

\begin{remark}
The definition of intermediate Jacobians can be extended to pure Hodge structures of odd weight. Assume that $H$ is a Hodge structure of weight $2k-1$ then we define the complex torus $J^{k}(H):=H_\C/(F^k H\oplus H)$. This construction is functorial with respect to morphisms of Hodge structures. For more details about these facts see \cite[Remarque 12.3]{Voi} and \cite[Section 3.5]{PS}. 
\end{remark}

There exist maps $c^k_D:\CH^k(X) \to H^{2k}_D(X,\Z(k))$ and $\Phi_X^k: \CH^k(X)_\text{hom} \to J^k(X)$ called the \textbf{Deligne cycle class} and the \textbf{Abel-Jacobi map} respectively. There is a useful relation between the Deligne cycle class map, the Abel-Jacobi map and the cycle class map given by the following commutative diagram with exact rows:
  \[
  \begin{tikzcd}
  0 \arrow{r} & \text{CH}^{k}(X)_\text{hom} \arrow{r} \arrow{d}{\Phi_X^k} &\text{CH}^k(X) \arrow{r}{c^k}  \arrow{d}{c^k_{D}} & I^k(X)  \arrow{d}{\text{into}}\arrow{r} & 0\\
  0 \arrow{r} &J^k(X) \arrow{r} &H^{2k}_D(X,\Z(k)) \arrow{r}   & \text{Hdg}^{2k}(X,\Z) \arrow{r}& 0.
  \end{tikzcd}
\]

For Lichtenbaum cohomology groups we have analogous maps, $c^k_{L,D}:\CH^k_L(X) \to H^{2k}_D(X,\Z(k))$ and $\Phi_{X,L}^k: \CH^k_L(X)_\text{hom} \to J^k(X)$ (the construction of the first one is done in \cite[Theorem 4.4]{RS}) which fit in a similar commutative diagram as the one given before.

\begin{remark}\label{remDel}
Let $\ell$ be a prime number and $r \in \mathbb{N}$. Notice that the exact triangle $0\to \Omega^{\leq n-1}[-1] \to \Z_D(n)\to \Z(n)\to 0$ induces maps $c^{m,n}_{D,B}:H^{m}_D(X,\Z(n))\to H^m_B(X,\Z(n))$ which fit in the following commutative diagram
\begin{equation*}
  \begin{tikzcd}
 0 \arrow{r}& H^{m-1}_D(X,\Z(n))\otimes \Z/\ell^r \arrow{d}\arrow{r} &H^{m-1}_D(X,\Z/\ell^r(n)) \arrow{r}{\beta_D} \arrow{d}{\simeq}& H^{m}_D(X,\Z(n))[\ell^r] \arrow{d}{c^{m,n}_{D,B}} \arrow{r}&0 \\
0 \arrow{r} & H^{m-1}_B(X,\Z(n))\otimes \Z/\ell^r \arrow{r}& H^{m-1}_B(X,\Z/\ell^r(n)) \arrow{r}{\beta} & H^{m}_B(X,\Z(n))[\ell^r]\arrow{r}& 0 
  \end{tikzcd}
\end{equation*}
where $\beta_D$ is the morphism induced by the exact triangle $0\to\Z_D(n) \xrightarrow{\cdot \ell^r} \Z_D(n)\to (\Z/\ell^r)_D(n)\to 0$. Also we obtain another commutative diagram:
\begin{equation*}
  \begin{tikzcd}
  0 \arrow{r} & H^{m-1}_L(X,\Z(n))\otimes \Z/\ell^r \arrow{r} \arrow{d}{} & H^{m-1}_\et(X,\mu_{\ell^r}^{\otimes n}) \arrow{r}  \arrow{d}{\simeq } & H^m_L(X,\Z(n))[\ell^r] \arrow{d}{c_{D,L}^{m,n}}\arrow{r} & 0\\
  0 \arrow{r} & H^{m-1}_D(X,\Z(n))\otimes \Z/\ell^r \arrow{r} \arrow{d}& H^{m-1}_D(X,\Z/\ell^r(n)) \arrow{r}{\beta_D} \arrow{d}{\simeq}  & H^m_D(X,\Z(n))[\ell^r] \arrow{r} \arrow{d}{c_{D,B}^{m,n}} & 0 \\
    0 \arrow{r}& H^{m-1}_B(X,\Z(n))\otimes \Z/\ell^r \arrow{r}& H^{m-1}_B(X,\Z/\ell^r(n)) \arrow{r}{\beta}   & H^m_B(X,\Z(n))[\ell^r] \arrow{r} & 0.
  \end{tikzcd}
\end{equation*}
By the snake lemma the arrows
\begin{align*}
    H^{m-1}_L(X,\Z(n))/\ell^r \to H^{m-1}_D(X,\Z(n))/\ell^r \text{ and  } H^{m-1}_D(X,\Z(n))/\ell^r \to H^{m-1}_B(X,\Z(n))/\ell^r
\end{align*}
 are injective while the arrows 
 \begin{align*}
       H^m_L(X,\Z(n))[\ell^r] \to H^m_D(X,\Z(n))[\ell^r] \text{ and } H^m_D(X,\Z(n))[\ell^r] \to  H^m_B(X,\Z(n))[\ell^r]
 \end{align*}
 are surjective. Also the image of the composite of the right vertical arrows is equal to the image of $c^{m,n}_L$ restricted to $\ell^r$-torsion elements.
\end{remark}

\begin{lemma}\label{lemma}
Let $X$ be a smooth projective variety over $\C$, and fix an integer $k$ such that $1\leq k \leq \text{dim}_\C(X)$. Then the induced map of torsion groups $\Phi_X^k\Big|_\text{tors}:\left(\text{CH}^{k}_L(X)_{\text{hom}}\right)_{\text{tors}} \to J^k(X)_{\text{tors}}$ is an isomorphism.
\end{lemma}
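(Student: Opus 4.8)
The plan is to prove the statement one prime at a time and then reassemble. Since torsion decomposes as $\left(\CH^k_L(X)_{\text{hom}}\right)_{\text{tors}}=\bigoplus_\ell \CH^k_L(X)_{\text{hom}}\{\ell\}$ and $J^k(X)_{\text{tors}}=\bigoplus_\ell J^k(X)\{\ell\}$, and the Abel--Jacobi map $\Phi^k_{X,L}$ (the map denoted $\Phi^k_X\big|_{\text{tors}}$ in the statement, obtained by restricting $c_{D,L}$ to homologically trivial classes) respects these primary decompositions, it suffices to show that $\Phi^k_{X,L}$ induces an isomorphism $\CH^k_L(X)_{\text{hom}}\{\ell\}\to J^k(X)\{\ell\}$ for each prime $\ell$. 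The reason to work $\ell$-primarily, rather than with a fixed $\ell^r$, is that in the colimit over $r$ the finite-torsion discrepancies between $G\otimes\Z/\ell^r$ and $G[\ell^r]$ that clutter Remark \ref{remDel} disappear.

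First I would pass to the colimit over $r$ in the two commutative diagrams of Remark \ref{remDel} in bidegree $(2k-1,k)$, obtaining short exact coefficient sequences for Lichtenbaum, Deligne and Betti cohomology. Using the rigidity isomorphism $H^{2k-1}_{M,\et}(X,\Z/\ell^r(k))\simeq H^{2k-1}_\et(X,\mu_{\ell^r}^{\otimes k})$, Artin's comparison between \'etale and Betti cohomology with finite coefficients, and the identification of Deligne cohomology with finite coefficients with Betti cohomology, the three middle terms $H^{2k-1}_L(X,\Q_\ell/\Z_\ell(k))$, $H^{2k-1}_D(X,\Q_\ell/\Z_\ell(k))$ and $H^{2k-1}_B(X,\Q_\ell/\Z_\ell(k))$ all become canonically isomorphic, compatibly with the vertical comparison maps $c_{D,L}$ and $c_{D,B}$ of Remark \ref{remDel}.

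The crucial point is that the lower-left term vanishes: $H^{2k-1}_D(X,\Z(k))\otimes\Q_\ell/\Z_\ell=0$. Indeed $H^{2k-1}_D(X,\Z(k))$ is an extension of the finite group $H^{2k-1}(X,\Z)_{\text{tors}}$ by the divisible group $H^{2k-2}(X,\C)/(F^kH^{2k-2}(X,\C)+H^{2k-2}(X,\Z))$, and $-\otimes\Q_\ell/\Z_\ell$ annihilates both finite and divisible groups. Applying the snake lemma to the colimited map of short exact sequences from the Lichtenbaum row to the Deligne row, whose middle vertical arrow is now an isomorphism and whose lower-left term is zero, yields $\ker c_{D,L}\simeq\operatorname{coker}(H^{2k-1}_L(X,\Z(k))\otimes\Q_\ell/\Z_\ell\to 0)=0$ together with $\operatorname{coker} c_{D,L}=0$; hence $c_{D,L}\colon \CH^k_L(X)\{\ell\}\xrightarrow{\simeq} H^{2k}_D(X,\Z(k))\{\ell\}$ is an isomorphism.

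It remains to restrict this isomorphism to the right kernels. Because torsion Betti classes are automatically of type $(k,k)$ we have $\text{Hdg}^{2k}(X,\Z)\{\ell\}=H^{2k}_B(X,\Z(k))\{\ell\}$, and since $J^k(X)$ is divisible the sequence $0\to J^k(X)\to H^{2k}_D(X,\Z(k))\to\text{Hdg}^{2k}(X,\Z)\to 0$ identifies $J^k(X)\{\ell\}$ with $\ker\!\big(H^{2k}_D(X,\Z(k))\{\ell\}\to H^{2k}_B(X,\Z(k))\{\ell\}\big)$; on the other side $\CH^k_L(X)_{\text{hom}}\{\ell\}=\ker\!\big(\CH^k_L(X)\{\ell\}\to H^{2k}_B(X,\Z(k))\{\ell\}\big)$ by definition of homological equivalence. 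As $c^k_L=c_{D,B}\circ c_{D,L}$, the isomorphism $c_{D,L}$ carries the first kernel isomorphically onto the second, which is exactly the claim on $\ell$-primary torsion; summing over $\ell$ finishes the proof. The delicate steps I expect to need the most care are the four-fold comparison of the middle $\Q_\ell/\Z_\ell$-terms and the precise extension structure of $H^{2k-1}_D(X,\Z(k))$ that makes its $\otimes\Q_\ell/\Z_\ell$ vanish, together with the verification that $\Phi^k_{X,L}$ genuinely coincides with the restriction of $c_{D,L}$ via the commutative diagram relating the Abel--Jacobi map, the Deligne cycle class and the Betti cycle class map.
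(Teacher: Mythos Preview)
Your argument is correct and is essentially the content of \cite[Proposition 5.1(a)]{RS}, which is exactly what the paper's one-line proof cites; you have simply unpacked that reference using the diagrams already assembled in Remark \ref{remDel}. The only cosmetic point is the phrasing ``$\ker c_{D,L}\simeq\operatorname{coker}(H^{2k-1}_L(X,\Z(k))\otimes\Q_\ell/\Z_\ell\to 0)$'': what the snake lemma actually gives is that $\ker c_{D,L}$ sits between $\ker g=0$ and $\operatorname{coker} f=0$, hence vanishes, so the conclusion is right but the identification as stated is not literally how the snake sequence reads.
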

\begin{proof}
Direct consequence of \cite[Proposition 5.1(a)]{RS}.
\end{proof}

\begin{remark}\label{rem}
Notice that if we set $k=\text{dim}(X)$, by Proposition \ref{lemGe} Chow groups and Lichntebaum cohomology coincide. Then we recover the classical Roitman's theorem
    \begin{align*}
        \CH^\text{hom}_0(X)_\text{tors} \simeq \text{Alb}_X(\C)_\text{tors}.
    \end{align*}
\end{remark}

We say that $W \subset H^{2k}_B(X,\Z(k))$ is a sub-Hodge structure if $W$ is s sub-lattice of $ H^{2k}_B(X,\Z(k))$ such that it has an induced Hodge decomposition $W_\C =\bigoplus_{p+q=2k} W^{p,q}$ with $W^{p,q}=W_\C \cap H^{p,q}$. Let $W\subset H^{2k}_B(X,\Z(k))$ be sub-Hodge structure, we define the partial Hodge conjecture with rational coefficients related to $W$ as the following statement: for every element $\alpha \in W$ there exists $N \in \mathbb{N}$ and an algebraic cycle $\widetilde{\alpha} \in \CH^k(X)$ such that $c(\widetilde{\alpha})=N\alpha$. It is clear that for $W=\text{Hdg}^{2k}(X,\Z)$ we recover the usual Hodge conjecture. For a fixed $W$ we denote the previous statement by $\text{HC}^k(X,W)_\Q$.

Similarly we denote by $\text{HC}^k_L(X,W)_{\Z}$ the statement that for every element of $\alpha \in W$ there exists a Lichtenbaum cycle $\widetilde{\alpha} \in \CH^k_L(X)$ such that $c_L(\widetilde{\alpha})=\alpha$. Then, inspired by the proof of \cite[Theorem 1.1]{RS}, we obtain the following result:

\begin{prop}\label{propE}
Let $X$ be a complex smooth projective variety and let $W \subset H^{2k}_B(X,\Z(k))$ be a sub-Hodge structure. Then $\text{HC}^k_L(X,W)_{\Z}$ holds  if and only if $ \text{HC}^k(X,W)_\Q$ holds.
\end{prop}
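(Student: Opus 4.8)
The plan is to prove the two implications separately, as the direction $\text{HC}^k_L(X,W)_\Z \Rightarrow \text{HC}^k(X,W)_\Q$ is formal while the converse carries all the content. For the formal direction I would take $\alpha\in W$ together with $\gamma\in\CH^k_L(X)$ satisfying $c^k_L(\gamma)=\alpha$, and tensor with $\Q$. The comparison map induces an isomorphism $\CH^k(X)\otimes\Q\xrightarrow{\sim}\CH^k_L(X)\otimes\Q$ (rational coefficients, cf. the remark that $H^m_M(X,\Q(n))\simeq H^m_L(X,\Q(n))$), and since $c^k_L\circ\kappa=c^k$ with unchanged Betti target, one gets $\text{im}(c^k_L\otimes\Q)=\text{im}(c^k\otimes\Q)$. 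Thus $\alpha\otimes 1$ lies in $\text{im}(c^k\otimes\Q)$, so $N\alpha$ is algebraic for some $N$, which is $\text{HC}^k(X,W)_\Q$.

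For the converse the crux is the divisibility statement: if $\beta\in H^{2k}_B(X,\Z(k))$ satisfies $m\beta\in\text{im}(c^k_L)$ for some $m\geq 1$, then $\beta\in\text{im}(c^k_L)$. Granting this, the implication $\text{HC}^k(X,W)_\Q\Rightarrow\text{HC}^k_L(X,W)_\Z$ is immediate: for $\alpha\in W$ choose $N$ and $\tilde\alpha\in\CH^k(X)$ with $c^k(\tilde\alpha)=N\alpha$; applying $\kappa$ gives $N\alpha=c^k_L(\kappa(\tilde\alpha))\in\text{im}(c^k_L)$, and divisibility yields $\alpha\in\text{im}(c^k_L)$, i.e. $\text{HC}^k_L(X,W)_\Z$. (Here $W$ being of type $(k,k)$ only serves to ensure $W\subset\text{Hdg}^{2k}(X,\Z)$, so that both statements concern genuinely Hodge classes; otherwise both fail trivially since $\text{im}(c^k_L)\subset\text{Hdg}^{2k}(X,\Z)$.)

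To prove the divisibility statement I would factor $m$ into primes and reduce by induction to the case $m=\ell$ prime. So suppose $\ell\beta=c^k_L(\gamma)$ with $\gamma\in\CH^k_L(X)=H^{2k}_L(X,\Z(k))$. Reducing modulo $\ell$, the class $c^k_L(\gamma)=\ell\beta$ maps to $0$ in $H^{2k}_B(X,\Z/\ell(k))$; by compatibility of the $L$-cycle class map with reduction of coefficients, this image equals that of $\bar\gamma$ under the mod-$\ell$ cycle class map $H^{2k}_L(X,\Z/\ell(k))\to H^{2k}_B(X,\Z/\ell(k))$. By the rigidity theorem (the quasi-isomorphism $(\Z/\ell)_X(k)_\et\simeq\mu_\ell^{\otimes k}$) together with the Artin comparison over $\C$, this mod-$\ell$ map is identified with the isomorphism $H^{2k}_\et(X,\mu_\ell^{\otimes k})\cong H^{2k}_B(X,\Z/\ell(k))$, hence is injective. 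Therefore $\bar\gamma=0$, i.e. $\gamma\in\ell\,\CH^k_L(X)$ by the Bockstein sequence attached to $0\to\Z(k)\xrightarrow{\ell}\Z(k)\to\Z/\ell(k)\to 0$. Writing $\gamma=\ell\gamma'$ gives $\ell(\beta-c^k_L(\gamma'))=0$, so $\beta-c^k_L(\gamma')\in H^{2k}_B(X,\Z(k))[\ell]$.

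Finally I would invoke the unconditional torsion surjectivity of Remark \ref{remDel}: the $L$-cycle class map restricted to $\ell$-torsion surjects onto $H^{2k}_B(X,\Z(k))[\ell]$, so $\beta-c^k_L(\gamma')=c^k_L(\delta)$ for some $\delta$, whence $\beta=c^k_L(\gamma'+\delta)\in\text{im}(c^k_L)$. The main obstacle — and the point where the Lichtenbaum formalism does the real work — is exactly this last input: the divisibility step closes the gap between $\ell\beta$ and $\beta$ only because torsion Betti classes are unconditionally algebraic in the $L$-sense (which fails for the ordinary cycle class map $c^k$, whose cokernel can have torsion by the Atiyah--Hirzebruch examples) and because the mod-$\ell$ Lichtenbaum cycle class map is an isomorphism by rigidity. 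I expect that verifying the compatibility of $c^k_L$ with coefficient reduction and with the Artin comparison, so that the mod-$\ell$ map really is injective, is the only delicate bookkeeping required.
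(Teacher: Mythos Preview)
Your proof is correct and takes a genuinely more direct route than the paper's. The paper argues via Deligne cohomology and intermediate Jacobians: it compares $\CH^k_{W,L}(X)$ with the pullback $H^{2k}_{W,D}(X,\Z(k))$ of $W$ to Deligne cohomology, invokes the generalized Roitman theorem (Lemma~\ref{lemma}, i.e.\ $(\CH^k_L(X)_{\text{hom}})_{\text{tors}}\simeq J^k(X)_{\text{tors}}$) together with the vanishing $\CH^k_L(X)_{\text{hom}}\otimes\Q/\Z=0$ to match torsion, and then uses the divisibility of $J^k(X)$ to conclude that $Z^k_{W,L}(X)=W/I^k_{W,L}(X)$ is torsion-free. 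You instead prove directly that $\text{im}(c^k_L)$ is saturated in $H^{2k}_B(X,\Z(k))$ using only the Bockstein sequence and the mod-$\ell$ comparison isomorphism (rigidity plus Artin): this is essentially Remark~\ref{remDel} applied with $m=2k+1$, which gives the injectivity $\CH^k_L(X)/\ell\hookrightarrow H^{2k}_B(X,\Z(k))/\ell$, combined with the same remark at $m=2k$ for the surjectivity on $\ell$-torsion. Your argument is shorter and isolates precisely the two cohomological inputs needed, bypassing the Abel--Jacobi map and Deligne cohomology entirely; the paper's route, while longer, keeps the geometry of the intermediate Jacobian visible and stays closer to the original Rosenschon--Srinivas presentation. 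The only point you flag as delicate---compatibility of $c^k_L$ with reduction of coefficients and with the Artin comparison---is exactly the content of the middle vertical isomorphisms in Remark~\ref{remDel}, so it is already granted in the paper's framework.
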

\begin{proof}
Let $W\subset \text{H}^{2k}_B(X,\Z(k))$ be a sub-Hodge structure and let $c^k_L:\text{CH}^k_L(X)\to \text{H}^{2k}_B(X,\Z(k))$ be the Lichtenbaum cycle class map constructed in \cite{RS} (similarly we can consider the classical cycle class map $c^k:\text{CH}^k(X)\to \text{H}^{2k}_B(X,\Z)$). Set $\text{CH}^k_{W,L}(X):=(c_L^k)^{-1}(W)$ as the preimage of $W$ in $\text{CH}_L^k(X)$. It is easy to see that $\text{CH}^k_L(X)_{\text{hom}} \hookrightarrow \text{CH}^k_{W,L}(X)$. Following with this notation, we will denote $I_{W,L}^k(X):=\text{im}(c_L^k)\cap W$, therefore $W$ is Lichtenbaum algebraic if and only if $Z_{W,L}^k(X):=W/I_{W,L}^k(X)=0$. In   the classical case, this is equivalent to say that $W_\Q$ is algebraic if and only if $Z^k_W(X)$ is a finite group. Since $I_W^k(X)\subset I_{W,L}^k(X)$ we have an exact sequence
\begin{align*}
0 \to I_{W,L}^k(X)/I_{W}^k(X) \to Z^k_W(X)\to Z^k_{W,L}(X)\to 0
\end{align*}

Denote $\pi:H^{2k}_D(X,\Z(k))\to \text{Hdg}^{2k}(X,\Z)$ the surjective map coming from the short exact sequence of Deligne-Beilinson cohomology, intermediate Jacobian and Hodge classes and denote $H^{2k}_{W,D}(X,\Z(k)):=\pi^{-1}(W)$. Then we have the following commutative diagram:

  \[
  \begin{tikzcd}
  0 \arrow{r} & \text{CH}^{k}_L(X)_{\text{hom}}  \arrow{r} \arrow{d}{c^k_{D,L}|_{\text{hom}}} &\text{CH}^k_{W,L}(X) \arrow{r}{c_L^k}  \arrow{d}{c^k_{D,L}|_{W_L^{-1}}} & I_{W,L}^k(X)  \arrow{d}{\text{into}}\arrow{r} & 0\\
  0 \arrow{r} &J^k(X) \arrow{r} & H^{2k}_{W,D}(X,\Z(k)) \arrow{r}   & W \arrow{r}& 0
  \end{tikzcd}
\]

Since $\text{CH}^k_L(X)_\text{hom}\otimes \Q/\Z =0$ by {\cite[Proposition 5.1 (b)]{RS}} and $J^k(X)$ is divisible, then we obtain the commutative diagram but with the torsion part of each group
  \[
  \begin{tikzcd}
  0 \arrow{r} & \left(\text{CH}^{k}_L(X)_\text{hom}\right)_{\text{tors}} \arrow{r} \arrow{d}{c^k_{D,L}|_\text{hom}} & \left(\text{CH}^k_{W,L}(X) \arrow{r}{c_L^k}\right)_{\text{tors}}  \arrow{d}{c^k_{D,L}|_{W_L^{-1}}} & I_{W,L}^k(X)_{\text{tors}}  \arrow{d}{\text{into}}\arrow{r} & 0\\
  0 \arrow{r} & J^k(X)_{\text{tors}} \arrow{r} & H^{2k}_{W,D}(X,\Z(k))_{\text{tors}} \arrow{r}   & W_{\text{tors}} \arrow{r}& 0
  \end{tikzcd}
\]
Due to the surjectivity of $\text{CH}^k_L(X)_{\text{tors}}\to H^{2k}(X,\Z)_{\text{tors}}$, the right vertical arrow is an isomorphism. If we can prove that the arrow in the middle is surjective, then we can conclude with similar arguments as in \cite{RS}, but this comes from the fact that  $\left(\text{CH}^k_L(X)_\text{hom}\right)_{\text{tors}}\simeq J^k(X)_{\text{tors}}$ by Lemma \ref{lemma}, and therefore  $c^k_{D,L}|_\text{hom}$ induces an isomorphism in the torsion part.

 Since we have an isomorphism $ \left(\text{CH}^k_{W,L}(X)\right)_{\text{tors}} \simeq H^{2k}_{W,D}(X,\Z)_{\text{tors}}$, we obtain a commutative diagram
\begin{equation}\label{6tor}
  \begin{tikzcd}
  0 \arrow{r} & A_{\text{tors}} \arrow{r} \arrow{d}{\simeq} & A \arrow{r}  \arrow{d}{c^k_{D,L}|_{A}} & A \otimes \Q  \arrow{d}\arrow{r} & A \otimes \Q/\Z \arrow{d}{\text{into}}\arrow{r}&0\\
    0 \arrow{r} & B_{\text{tors}} \arrow{r} & B \arrow{r}   & B \otimes \Q \arrow{r} & B\otimes \Q/\Z \arrow{r}& 0
  \end{tikzcd}
\end{equation}
where $A=\text{CH}^k_{W,L}(X) $ and $B=H^{2k}_{W,D}(X,\Z(k))$ and $A \otimes \Q/\Z \hookrightarrow B \otimes \Q/\Z$ is an injection, this can be seen in the computations done in Proposition \ref{lemma}. We can split diagram (\ref{6tor}) into two diagrams with short exact sequences as rows:
  \[
  \begin{tikzcd}
  0 \arrow{r} & A_{\text{tors}} \arrow{r} \arrow{d}{\simeq} & A \arrow{r}  \arrow{d}{c^k_{D,L}} & A_{\text{free}}  \arrow{d}{f}\arrow{r} & 0\\
    0 \arrow{r} & B_{\text{tors}} \arrow{r} & B \arrow{r}   & B_{\text{free}} \arrow{r} &  0
  \end{tikzcd}
\]
and
\begin{equation}\label{61tor}
  \begin{tikzcd}
  0 \arrow{r} & A_{\text{free}} \arrow{r} \arrow{d}{f} & A\otimes \Q \arrow{r}  \arrow{d}{c^k_{D,L}} & A\otimes \Q/\Z \arrow{d}{\text{into}}\arrow{r} & 0\\
    0 \arrow{r} & B_{\text{free}} \arrow{r} & B\otimes \Q \arrow{r}   & B\otimes \Q/\Z \arrow{r} &  0
  \end{tikzcd}
\end{equation}

The cokernel from the induced map $A\otimes \Q \to B\otimes \Q$ is torsion free as a quotient of $\Q$-vector spaces. Thus from diagram (\ref{61tor}), we obtain that $\text{coker}(f)$ is torsion free because it injects into a torsion free group, which, implies that $\text{coker}(c^k_{D,L}|_{A})$ is torsion free and,  along with the divisibility of $J^k(X)$, so $Z^k_{W,L}(X)$ as well.

The remaining part of the proof consists in proving that $I_{W,L}^k(X)/I^k_{W}(X)$ is a torsion group, but this comes from the fact that $I_{W,L}^k(X)$ and $I^k_{W}(X)$ have the same $\Z-$rank and therefore the quotient should be a finite group, so 
\begin{align*}
   Z^k_{W}(X) \otimes \Q=0 \iff Z^k_{W,L}(X)\otimes \Q=0\iff Z^k_{W,L}(X)=0.
\end{align*}
\end{proof}

\begin{remark}
Consider $X \in\text{SmProj}_\C$ of dimension $d$ and let us consider $H^{2d}_B(X\times X,\Z)$ modulo torsion. As we consider it modulo torsion, we apply the Künneth $H^{2d}_B(X\times X,\Z)\simeq \bigoplus H^{2d-i}_B(X,\Z)\otimes H^{i}_B(X,\Z)$ and let $\Delta_i \in H^{2d-i}_B(X,\Z)\otimes H^{i}_B(X,\Z)$ be the $i$-th component of the diagonal.  Consider $W_i$ be the sub-Hodge structure generated by $\Delta_i$, thus by Proposition \ref{propE} $W_i$ is $L$-algebraic if and only if $W_i\otimes \Q$ is algebraic, therefore the rational Künneth conjecture for $X$ holds if and only if the Künneth components are $L$-algebraic.
\end{remark}

\subsection{Generalized Hodge conjecture and Lichtenbaum cohomology}

Let $H$ be a pure Hodge structure of weight $n$ and let $0 \neq H_\C= H \otimes \C=\bigoplus_{p+q=n}H^{p,q}$. We say that $H$ is effective if and only if $H^{p,q}=0$ for $p< 0$ or $q<0$. The level of $l$ of $H$ is defined as $l = \text{max}\left\{|p-q| \ | \ H^{p,q}\neq 0\right\}$. Let $X$ be a smooth projective complex variety, we write $\text{GHC}(n,c,X)_\Q$ for the generalized Hodge conjecture in weight $n$ and level $n-2c$, where the conjectured result is the following:

\begin{conj}[{\cite[Generalized Hodge conjecture]{Gro}}]\label{conj1}
For every $\Q-$sub-Hodge structure $H\subset H^n(X,\Q)$ of level $n-2c$ there exists a subvariety $Y \subset X$ of pure codimension $c$ such that $H$ is supported on $Y$, i.e. $H$ is contained in the image of 
\begin{align*}
    H \subset \text{im}\left\{ H^l(\widetilde{Y},\Q(-c)) \xrightarrow{\gamma_*}H^n(X,\Q) \right\}
\end{align*}
where $\gamma_*=i_*\circ d_*$, $i_*$ is the Gysin map associated to the inclusion $i:Y\hookrightarrow X$ and $d:\widetilde{Y}\to Y$ is a resolution of singularities.
\end{conj}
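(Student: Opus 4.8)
\emph{On the nature of the statement.} The assertion above is Grothendieck's generalized Hodge conjecture \cite{Gro}: it is open in general, and no unconditional proof is available. What I can propose is therefore not a proof but the reduction strategy by which I would attack it, together with the precise point at which the argument ceases to be unconditional.

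The plan is to pass through Grothendieck's own reformulation in terms of the coniveau (arithmetic) filtration. Writing $N^c H^n(X,\Q)$ for the subspace of classes vanishing on the complement of some closed subvariety of codimension $\geq c$, the conclusion ``$H$ is supported on a codimension-$c$ subvariety $Y$'' is exactly the inclusion $H \subset N^c H^n(X,\Q)$. One direction is formal: if a class lies in the image of $\gamma_* = i_* \circ d_*$ from $H^l(\widetilde{Y},\Q(-c))$, then, since $\gamma_*$ is a morphism of pure Hodge structures of bidegree $(c,c)$ and $H^{l}(\widetilde{Y},\Q)$ is effective of weight $l=n-2c$, its image is a sub-Hodge structure of level $\leq n-2c$; by strictness this shows $N^c H^n(X,\Q)$ sits inside the largest sub-Hodge structure of level $\leq n-2c$. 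The content of the conjecture is the reverse inclusion, and this is where I would bring in the machinery of the earlier sections.

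My strategy for the reverse inclusion is to reduce it to two inputs, following the pattern of the paper's announced results (Theorem \ref{teo} and Corollary \ref{corf}). First, given a sub-Hodge structure $H \subset H^n(X,\Q)$ of level $\leq n-2c$, I would use the polarization to produce the projector $\pi_H \in \text{End}(H^n(X,\Q))$ onto $H$; via K\"unneth and Poincar\'e duality this is a Hodge class of type $(n,n)$ on $X \times X$, so realizing $\pi_H$ by an algebraic correspondence is an instance of the Hodge conjecture. This is the first input, $\text{HC}$. Second, granting algebraicity, the correspondence defines a homological motive $M$ whose Hodge realization is $H(c)$, which is an \emph{effective} Hodge structure of weight $n-2c$ (its nonzero pieces $H^{p,q}$ satisfy $c\leq p,q\leq n-c$). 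One then needs the principle that a homological motive whose Hodge realization is effective is itself effective, i.e. a direct summand of the motive of some smooth $\widetilde{Y}$ up to the appropriate Tate twist; unwinding this produces the subvariety $Y$ and the factorization through $H^l(\widetilde{Y},\Q(-c))$ demanded by the conjecture. In the low-level range — weight $2k-1$ and level $1$, so $c=k-1$ — this scheme can be carried out more concretely by testing $H$ against all smooth projective curves $C$ and invoking the refined equivalence of Proposition \ref{propE} in place of the full effectivity principle.

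The hard part, and indeed the obstruction to completing the plan, is that neither input is available unconditionally: the algebraicity of $\pi_H$ is precisely the Hodge conjecture, and the effectivity principle for motives is itself conjectural (it is the second clause of Theorem \ref{teo}). Consequently the honest outcome of this strategy is not a theorem but an equivalence — $\text{GHC}(n,c,X)$ holds once $\text{HC}$ and the effectivity principle hold, and conversely these are forced by $\text{GHC}$ — so the genuinely unconditional content lies in the Lichtenbaum reformulation and the equivalences of the later sections rather than in any direct resolution of the conjecture as stated.
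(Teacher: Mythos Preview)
Your assessment is correct: the statement is a conjecture, the paper offers no proof of it, and your outlined reduction to the Hodge conjecture plus the effectivity-of-motives principle is precisely the content of the paper's Theorem \ref{teo} and Corollary \ref{corf}. There is nothing to compare against, and your identification of where the argument becomes conditional matches the paper exactly.
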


There is an equivalent assertion of the generalized Hodge conjectures, in terms of algebraic cycles, for a proof of which, we refer to {\cite[Lemma 0.1]{Sc}}. 

\begin{conj}\label{conj2}
If $H\subset H^{n}(X,\Q)$ is a $\Q$-sub-Hodge structure of level $l=n-2c$, then $\text{GHC}(n,c,X)$ holds for $H$ if and only if there exists a smooth projective complex variety $Y$ together with an element $z \in \text{Corr}^c(Y,X)$ such that $H$ is contained in $z_* H^l(Y,\Q)$, where $z_*$ is given by the formula
\begin{align*}
z_*(\eta)=\text{pr}_{X*}\left(\text{pr}^*_Y(\eta)\cup c(z)\right).
\end{align*} 
\end{conj}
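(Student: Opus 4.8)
The plan is to prove the equivalence by establishing the two implications separately, using in both directions the compatibility of the correspondence action $z_*$ with the Gysin pushforward of graph morphisms and with the projection formula.

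For the implication from $\text{GHC}(n,c,X)$ to the correspondence form, I would assume $H \subset \text{im}\{\gamma_* : H^l(\widetilde T,\Q(-c)) \to H^n(X,\Q)\}$, where $T \subset X$ has pure codimension $c$, where $d : \widetilde T \to T$ is a resolution, and $\gamma = i \circ d : \widetilde T \to X$ with $\widetilde T$ smooth projective of dimension $\dim X - c$. As Conjecture-2 datum I would take the variety $\widetilde T$ together with the class of the graph $z := [\Gamma_\gamma] \in \CH^{\dim X}(\widetilde T \times X)$. Since $\dim \widetilde T = \dim X - c$ one has $\CH^{\dim X}(\widetilde T \times X) = \text{Corr}^c(\widetilde T, X)$, so $z$ has the right degree. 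Writing $\Gamma_\gamma = j_* 1$ for the closed immersion $j = (\text{id},\gamma) : \widetilde T \hookrightarrow \widetilde T \times X$ and applying the projection formula gives $z_*(\eta) = \text{pr}_{X*}(\text{pr}_{\widetilde T}^*\eta \cup j_* 1) = \gamma_*(\eta)$, whence $z_* H^l(\widetilde T,\Q) = \text{im}(\gamma_*) \supseteq H$. This direction is formal.

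The content lies in the converse. Suppose $z \in \text{Corr}^c(Y,X) = \CH^{c+\dim Y}(Y\times X)$ satisfies $H \subset z_* H^l(Y,\Q)$, and write $z = \sum_i n_i [V_i]$ with each $V_i \subset Y\times X$ irreducible of dimension $\dim X - c$. I would set $Z := \bigcup_i \text{pr}_X(V_i) \subset X$, so that $\dim Z \le \dim X - c$, i.e. $\text{codim}_X Z \ge c$ (enlarging $Z$ to pure codimension $c$ if necessary, which only enlarges the final image). Choosing a resolution $\mu : \widetilde V \to \coprod_i V_i \hookrightarrow Y\times X$ and putting $\tilde p = \text{pr}_Y \circ \mu$, $\tilde q = \text{pr}_X \circ \mu$, the identity $[z] = \mu_* 1$ (taken componentwise, with multiplicities) together with the projection formula yields, for $\eta \in H^l(Y,\Q)$,
\begin{align*}
z_*(\eta) = \text{pr}_{X*}\bigl(\text{pr}_Y^*\eta \cup \mu_* 1\bigr) = \tilde q_*\bigl(\tilde p^*\eta\bigr).
\end{align*}
Thus $z_* H^l(Y,\Q) \subset \tilde q_* H^l(\widetilde V,\Q)$, with $\tilde q : \widetilde V \to X$ factoring through $Z$.

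To conclude I would show that $\tilde q_* H^l(\widetilde V,\Q)$ is supported on $Z$ and then recognise this support as a Gysin image. Since $\tilde q(\widetilde V) \subseteq Z$, the preimage of the open complement $X \setminus Z$ under $\tilde q$ is empty, so base change along the open immersion forces $j^*\tilde q_* = 0$; hence $H \subset z_* H^l(Y,\Q) \subset \ker\bigl(H^n(X,\Q) \to H^n(X\setminus Z,\Q)\bigr)$. By the standard coniveau reformulation, for a resolution $d : \widetilde Z \to Z$ and $\gamma = i \circ d : \widetilde Z \to X$ this kernel is contained in $\text{im}\{\gamma_* : H^{n-2c}(\widetilde Z,\Q(-c)) \to H^n(X,\Q)\}$, which gives $\text{GHC}(n,c,X)$ for $H$. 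The main obstacle is precisely this last identification: controlling the part of $H^n(X,\Q)$ supported on $Z$ through resolution of singularities and purity, and matching it with the Gysin image in the correct cohomological degree. All remaining steps are formal consequences of the projection formula and of the action of graph correspondences.
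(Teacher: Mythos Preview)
The paper does not supply its own proof of this equivalence: it introduces Conjecture~\ref{conj2} as a reformulation of Conjecture~\ref{conj1} and refers the reader to \cite[Lemma 0.1]{Sc} for the argument. There is therefore no in-paper proof to compare your proposal against.

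That said, your outline is the standard one and is essentially correct. The forward direction---taking $Y=\widetilde T$ and $z=[\Gamma_\gamma]$, then identifying $z_*$ with $\gamma_*$ via the projection formula---is indeed purely formal. For the converse, your reduction is the right one: writing $z=\sum n_i[V_i]$, pushing the support to $Z=\bigcup_i \text{pr}_X(V_i)$, and using the projection formula to see that $z_*H^l(Y,\Q)$ lies in $\ker\bigl(H^n(X,\Q)\to H^n(X\setminus Z,\Q)\bigr)$. You correctly isolate the only non-formal step, namely the identification of this kernel with the Gysin image $\gamma_*H^{n-2c}(\widetilde Z,\Q)$ for a resolution $\widetilde Z\to Z$. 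That identification is a genuine input from mixed Hodge theory (purity of the image of $H^n_Z(X,\Q)$ in $H^n(X,\Q)$ together with a stratification/resolution argument), and it is exactly what the cited reference supplies. So your proposal is sound, with the caveat that the ``main obstacle'' you flag is not a gap in your argument but the place where one must invoke the known result.
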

Also notice that this conjecture, and similar to the Hodge conjecture, can be stated in term of classical motives over $\C$
\begin{prop}{\cite[Page 301]{Gro}}
The generalized Hodge conjecture  for all $X \in \text{SmProj}_\C$ is equivalent to the following statement: the Hodge conjecture holds and an homological motive is effective if and only if its Hodge realization is effective.
\end{prop}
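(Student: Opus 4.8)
The plan is to prove the two implications separately, after observing that one half of the effectivity condition is automatic: if a homological motive $M$ is effective, it is a direct summand of some $h(Y)=\bigoplus_n h^n(Y)$ with $Y$ smooth projective, and since each $H^n(Y,\Q)$ is an effective Hodge structure, so is any Hodge summand; hence $\rho_H(M)$ is effective unconditionally. Thus the second bullet reduces to its converse. Throughout I would use the characterisation recalled above, that $\text{HC}(X)_\Q$ for all $X$ is equivalent to fullness of $\rho_H$, together with the fact that $\text{HC}(X\times X)_\Q$ makes the Künneth projectors algebraic, so that I may split $h(X)=\bigoplus_n h^n(X)$ motivically and argue one weight at a time.

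First I would show that $\text{GHC}$ implies the two conditions. For the Hodge conjecture, specialise $\text{GHC}(2k,k,X)$: a sub-Hodge structure of level $0$ in $H^{2k}(X,\Q)$ is exactly one of pure type $(k,k)$, and Conjecture~\ref{conj1} places it in the image of $\gamma_*$ from $H^0(\widetilde{Y},\Q(-k))$, whose image is spanned by the fundamental classes of the codimension-$k$ components of $Y$; hence such classes are algebraic and $\text{HC}^k(X)_\Q$ follows. For the converse effectivity direction, let $M=(X,p,m)$ be a homological motive with $\rho_H(M)$ effective; reducing to a single weight by the algebraic Künneth projectors, write $\rho_H(M)=W(m)$ with $W\subset H^n(X,\Q)$, so that effectivity of $W(m)$ says precisely that $W$ has coniveau $\ge m$. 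By $\text{GHC}$ in the form of Conjecture~\ref{conj2} there is $z\in\text{Corr}^m(Y,X)$ with $W\subset z_*H^{n-2m}(Y,\Q)$; interpreting $z$ as a morphism $h(Y)\to h^n(X)(m)$ of homological motives and using fullness of $\rho_H$ to lift the Hodge-theoretic surjection onto $W$, I obtain $M$ as a summand of the effective motive $h^{n-2m}(Y)$, whence $M$ is effective.

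Then I would prove the reverse implication. Given $H\subset H^n(X,\Q)$ of level $n-2c$, i.e. of coniveau $\ge c$, the first step is to realise $H$ as the Hodge realization of a homological submotive $N\subseteq h^n(X)$: since polarizable Hodge structures form a semisimple abelian category, the projector onto $H$ is a morphism of Hodge structures, hence by fullness of $\rho_H$ it lifts to a motivic endomorphism, and---using that $\rho_H$ is faithful and conservative, so that homological motives form a semisimple abelian category---its image is a submotive $N$ with $\rho_H(N)=H$. Since $\rho_H(N(c))=H(c)$ is effective, the effectivity hypothesis gives that $N(c)$ is an effective motive, hence a direct summand of $h^{n-2c}(Y)$ for some smooth projective $Y$. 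Composing the split surjection $h(Y)\twoheadrightarrow N(c)$ with the twisted inclusion $N(c)\hookrightarrow h^n(X)(c)$ yields a morphism $h(Y)\to h^n(X)(c)$, that is an element $z\in\text{Corr}^c(Y,X)$ whose realization $z_*$ has image containing $H$; this is exactly Conjecture~\ref{conj2}, so $\text{GHC}(n,c,X)$ holds.

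The main obstacle will be the realisation step in the reverse direction: turning the bare sub-Hodge structure $H$ into an actual motivic direct summand $N$ with $\rho_H(N)=H$. This is where I must use fullness of $\rho_H$ (equivalently $\text{HC}$ on the relevant products) not merely to lift the projector but to guarantee that it can be taken idempotent and that its motivic image realizes to $H$ on the nose; this rests on the semisimple abelian structure of homological motives that $\text{HC}$ provides, and on the reduction to a single weight via algebraic Künneth components, which I would establish carefully before extracting the correspondence.
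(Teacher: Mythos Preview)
The paper does not supply its own proof of this proposition: it is stated with a bare citation to \cite[Page 301]{Gro} and no argument is given. The only comparable proof in the paper is that of Theorem~\ref{teo}, the Lichtenbaum analogue, so that is the natural point of comparison.

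Your argument is correct and in fact cleaner than the paper's proof of Theorem~\ref{teo} on one point. In the reverse direction you explicitly pass through a motive: you use fullness of $\rho_H$ to cut out a submotive $N\subset h^n(X)$ with $\rho_H(N)=H$, then apply the effectivity hypothesis to $N(c)$ to produce $Y$ with $N(c)$ a summand of $h^{n-2c}(Y)$, and finally compose to obtain the correspondence. The paper's proof of Theorem~\ref{teo} instead writes directly ``there is a smooth projective variety $Y$ such that $H(c)$ is a sub-Hodge structure of $H^{n-2c}(Y,\Z)$'' and then builds the map $f$ by polarization; the existence of such a $Y$ is exactly where the motivic effectivity hypothesis is being spent, but the paper leaves this implicit. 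Your route makes that dependence transparent.

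In the forward direction your argument is likewise sound, though note that to conclude $M$ is a direct summand of $h^{n-2m}(Y)$ from the surjectivity of $p\circ z$ on realizations you are using semisimplicity of the category of homological motives, which you correctly attribute to HC (fullness and faithfulness of $\rho_H$ into a semisimple target). The paper's treatment of this step in Theorem~\ref{teo} is terser (``the motive $M(c)$ is effective in $h(Y)$'') and relies on the same fact without spelling it out.
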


Based on the previous reformulation of the generalized Hodge conjecture, the authors Rosenschon and Srinivas proposed the following variant of the generalized Hodge conjecture for integral coefficients, but using Lichtenbaum cohomology groups:

\begin{conj}[L-Generalized Hodge Conjecture]\label{conjL}
Let $X$ be a smooth projective complex variety. If $H\subset H^{n}(X,\Z)$ is a $\Z$-sub-Hodge structure of level $l=n-2c$, then $\text{GHC}_L(n,c,X)$ holds for $H$ if and only if there exist a smooth projective complex variety $Y$ together with an element $z \in \text{Corr}^c_L(Y,X)$ such that $H$ is contained in $z_* H^l(Y,\Z)$.
\end{conj}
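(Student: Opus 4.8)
The statement is the integral Lichtenbaum analogue of the correspondence reformulation \ref{conj2} of the generalized Hodge conjecture, and the natural target is to prove it equivalent to the classical rational statement \ref{conj2}. The plan is to route both the classical and the Lichtenbaum versions through a single class of type $(k,k)$ on a product and then to apply Proposition \ref{propE} to that product.

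First I would set up a dictionary between the two formulations. Given a sub-Hodge structure $H\subset H^n(X,\Z)$ of level $l=n-2c$ and a pair $(Y,z)$ with $z\in\text{Corr}^c_L(Y,X)$, put $k=c+\dim Y$, so that $z\in\CH^k_L(Y\times X)$ and its $L$-cycle class $c_L(z)\in H^{2k}_B(Y\times X,\Z(k))$ is a Hodge class whose associated map $z_*$ is exactly the action on cohomology defined for $L$-correspondences above. The requirement $H\subset z_*H^l(Y,\Z)$ then becomes the statement that $c_L(z)$ lies in a prescribed sub-Hodge structure $W\subset H^{2k}_B(Y\times X,\Z(k))$, namely the one cut out by the condition that $H$ sit inside the image of $z_*$. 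Running the same bookkeeping classically expresses \ref{conj2} for $H\otimes\Q$ as the $\Q$-algebraicity of a class in the very same $W$.

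With this dictionary in hand the engine is Proposition \ref{propE} applied to $Y\times X$ and the sub-Hodge structure $W$: it yields that $W$ is $L$-algebraic if and only if $W\otimes\Q$ is algebraic. The forward implication upgrades a rational algebraic correspondence, furnished by the classical generalized Hodge conjecture, to an integral Lichtenbaum correspondence realizing the $L$-version, while the reverse implication descends an integral $L$-correspondence to a rational algebraic one; the torsion discrepancy between the integral and the rational pictures is absorbed by the unconditional surjectivity of $\CH^k_L(-)_\text{tors}\to H^{2k}_B(-,\Z)_\text{tors}$ established in \cite{RS}. One must still verify that the subvariety extracted from the support of $c_L(z)$ has codimension $\geq c$, but this is the same dimension count on the correspondence as in the classical Schoen argument, and it is insensitive to the topology because \'etale sheafification does not enlarge the supports of cycles.

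The step I expect to be hardest is matching the weight and level data across this reduction. Proposition \ref{propE} only speaks about classes of type $(k,k)$, whereas $H$ carries arbitrary weight $n$ and level $n-2c$; consequently the equivalence cannot be read off from $H$ directly but has to be transported entirely onto the $(k,k)$-class of the correspondence on the product. Isolating the precise sub-Hodge structure $W\subset H^{2k}_B(Y\times X,\Z(k))$ whose algebraicity encodes the condition ``$H\subset z_*H^l(Y,-)$'', and confirming that its integral $L$-algebraicity and its rational algebraicity correspond under Proposition \ref{propE} without weakening the codimension constraint, is where the argument will demand the most care.
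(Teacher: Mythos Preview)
The statement \ref{conjL} is not a result to be proved: it is the \emph{definition} of the L-generalized Hodge conjecture. The paper introduces it as ``the following variant of the generalized Hodge conjecture'' proposed by Rosenschon and Srinivas; the symbol $\text{GHC}_L(n,c,X)$ has no prior meaning in the text, so the ``if and only if'' here is definitional, and accordingly the paper contains no proof of \ref{conjL}.

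What you have sketched is instead an argument that $\text{GHC}_L(n,c,X)$ is equivalent to the classical $\text{GHC}(n,c,X)_\Q$. That is the content of Corollary \ref{corf} (and, in the special weight-$(2k{-}1)$, level-$1$ case, of Proposition \ref{k}), not of \ref{conjL}. Even viewed as a plan for that target, your outline has a structural gap. You propose to apply Proposition \ref{propE} to a sub-Hodge structure $W\subset H^{2k}_B(Y\times X,\Z(k))$, but $Y$ is not given in advance: it is part of the existential quantifier in both conjectures, so there is no fixed product on which to run Proposition \ref{propE}. Moreover the condition ``$H\subset z_*H^l(Y,\Z)$'' on a class $z$ is a surjectivity-type constraint, not a linear one, so it does not carve out a sub-Hodge structure $W$ in the way you describe. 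The paper's actual route first \emph{produces} a candidate $Y$---via the intermediate Jacobian $J^k(W)$ and Bertini in Proposition \ref{k}, or via the motivic effectiveness hypothesis in Theorem \ref{teo}---and only then invokes Proposition \ref{propE} on the now-fixed product $Y\times X$, applied to the sub-Hodge structure generated by a single explicit Hodge class (a morphism of Hodge structures viewed as an element of $H^{2(d_Y+c)-n}(Y,\Z)\otimes H^n(X,\Z)$).
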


For a smooth projective complex variety $X$ the conjecture \ref{conjL} is denoted by $\text{GHC}_L(n,c,X)_\Q$. In some particular cases it is known to be equivalent to $\text{GHC}(n,c,X)_\Q$. For instance if we consider $\text{GHC}(2k-1,k-1,X)_\Q$  in {\cite[$\S 2$]{Gro}} it was mentioned that with this level and weights is related with the usual Hodge conjecture:

\begin{prop}{\cite[Remark 12.30]{Lew}}\label{prop}
Let $X$ be a smooth projective complex variety, then $\text{GHC}(2k-1,k-1,X)_\Q$ holds if and only if $\left( H^{2k-1}(X,\Q)\otimes H^1(\Gamma,\Q) \right)\cap H^{k,k}(\Gamma \times X)$ is algebraic for every smooth projective complex curve $\Gamma$. 
\end{prop}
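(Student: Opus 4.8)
The plan is to reinterpret $\text{GHC}(2k-1,k-1,X)_\Q$ as a statement about morphisms of Hodge structures out of the $H^1$ of curves, and to play it against the fact that correspondences between curves are controlled by divisors. For a smooth projective curve $\Gamma$, Poincar\'e duality on $\Gamma$ gives a canonical identification $(H^1(\Gamma,\Q)\otimes H^{2k-1}(X,\Q))\cap H^{k,k}(\Gamma\times X)\cong \ho_{\text{HS}}(H^1(\Gamma,\Q),H^{2k-1}(X,\Q)(k-1))$, sending a Hodge class $\xi$ to the action $\xi_*\colon H^1(\Gamma,\Q)\to H^{2k-1}(X,\Q)$, $\eta\mapsto \text{pr}_{X*}(\text{pr}_\Gamma^*\eta\cup\xi)$; under this identification the \emph{algebraic} classes are exactly those $\xi_*$ induced by a cycle in $\text{Corr}^{k-1}(\Gamma,X)\otimes\Q=\CH^{k}(\Gamma\times X)\otimes\Q$, and the image $\xi_* H^1(\Gamma,\Q)$ is always a sub-Hodge structure of $H^{2k-1}(X,\Q)$ of level at most $1$. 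I also record that in Conjecture \ref{conj2} the auxiliary variety $Y$ may be taken to be a curve: restricting along a smooth curve section $C\subset Y$ gives an injection $H^1(Y,\Q)\hookrightarrow H^1(C,\Q)$ by the weak Lefschetz theorem, which splits because polarizable Hodge structures form a semisimple category, and the splitting is a class of type $(1,1)$ on $Y\times C$, hence algebraic by the Lefschetz $(1,1)$ theorem; composing, $z_* H^1(Y,\Q)\subseteq z'_* H^1(C,\Q)$ for an algebraic $z'\in\text{Corr}^{k-1}(C,X)\otimes\Q$.

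The second ingredient is a realization statement: every sub-Hodge structure $H\subset H^{2k-1}(X,\Q)$ of level at most $1$ equals $\xi_* H^1(\Gamma,\Q)$ for some curve $\Gamma$ and some (a priori merely Hodge) class $\xi$. Indeed $H(k-1)$ is an effective polarizable Hodge structure of weight one, so $H(k-1)\cong H^1(A,\Q)$ for an abelian variety $A$; since every abelian variety is an isogeny factor of the Jacobian of some curve $\Gamma$, semisimplicity makes $H(k-1)$ a direct summand of $H^1(\Gamma,\Q)$, and the resulting projection followed by $H(k-1)\hookrightarrow H^{2k-1}(X,\Q)(k-1)$ is the required $\xi$. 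Granting this, the implication ($\Leftarrow$) is immediate: assuming all the groups in the statement are algebraic, take any level-$\le 1$ sub-Hodge structure $H$, write $H=\xi_* H^1(\Gamma,\Q)$, use the hypothesis to replace $\xi$ by an algebraic cycle $z\in\text{Corr}^{k-1}(\Gamma,X)\otimes\Q$, and conclude $H\subset z_* H^1(\Gamma,\Q)$, which is exactly the support condition of Conjecture \ref{conj2} for $Y=\Gamma$; hence $\text{GHC}(2k-1,k-1,X)_\Q$ holds.

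For the converse ($\Rightarrow$), fix a curve $\Gamma$ and a Hodge class $\xi$ in the given group; the goal is to show $\xi$ is algebraic. Its image $H:=\xi_* H^1(\Gamma,\Q)$ has level at most $1$, so by the conjecture and the reduction above there is a curve $\Gamma'$ and an algebraic $w\in\text{Corr}^{k-1}(\Gamma',X)\otimes\Q$ with $H\subset w_* H^1(\Gamma',\Q)$. Writing $M=\text{im}(w_*)$, semisimplicity provides a section $s$ of $w_*\colon H^1(\Gamma',\Q)\twoheadrightarrow M$, and, using $\text{im}(\xi_*)\subseteq M$, the composite $\psi:=s\circ\xi_*\colon H^1(\Gamma,\Q)\to H^1(\Gamma',\Q)$ satisfies $w_*\circ\psi=\xi_*$. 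The decisive point is that $\psi$, being a morphism of weight-one Hodge structures, corresponds to a Hodge class of type $(1,1)$ on $\Gamma\times\Gamma'$, i.e. to a divisor class, which is algebraic by the Lefschetz $(1,1)$ theorem; thus $\psi=\gamma_*$ for some $\gamma\in\text{Corr}^0(\Gamma,\Gamma')\otimes\Q$, and $\xi$ is induced by the algebraic correspondence $w\circ\gamma$. I expect this factorization-and-descent step to be the main obstacle: it is exactly where weight $2k-1$ and level $1$ are used, since only there is the discrepancy between the two curves measured by a codimension-one class on a product of curves, for which algebraicity holds unconditionally through Lefschetz $(1,1)$; the semisimplicity of polarizable Hodge structures is the structural fact that makes both the realization and the factorization go through.
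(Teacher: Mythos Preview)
Your proof is correct and follows the same strategy the paper employs for the Lichtenbaum analogue in Proposition~\ref{k} (the paper does not prove the present proposition itself, citing \cite[Remark 12.30]{Lew}): one direction realizes a level-one sub-Hodge structure via its intermediate Jacobian and a curve section, and the other factors the given Hodge class through a morphism of weight-one Hodge structures, which is then algebraic by Lefschetz $(1,1)$. The only cosmetic difference is that you first reduce the auxiliary variety $Y$ to a curve before factoring, whereas the paper keeps $Y$ arbitrary and applies Lefschetz $(1,1)$ directly on $C\times Y$.
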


The Lichtenbaum version of the previous result still holds as is stated in \cite[Remarks 5.2]{RS} whose proof uses similar arguments as the ones presented in \cite[Remark 12.30]{Lew}. Before we go into the proof of the proposition it is necessary to introduce some notations and conventions. First Betti cohomology is considered modulo torsion. Denote as
\begin{align*}
H_\text{L-alg}^{2k-1}(X,\Z)&:=\left\{\sigma_*:H^1(Y,\Z)\to H^{2k-1}(X,\Z) |
\ \sigma \in \text{Corr}_L^{k-1}(Y,X), \ \text{dim}Y=1\right\}/\text{tors}
\end{align*}
where $Y$ is smooth and projective, and recall
\begin{align*}
H_{\text{max}}^{2k-1}(X,\Z)&=\left\{\text{the largest } \Z\text{-sub HS in }\left\{H^{k,k-1}(X)\oplus H^{k-1,k}(X)\right\}\cap H^{2k-1}(X,\Z)\right\}
\end{align*}
the equality of both is equivalent to the generalized Hodge conjecture $\text{GHC}_L(2k-1,k-1,X)$. Note that $H^{2k-1}_\text{L-alg}(X,\Z)\otimes \C=H^{k,k-1}_\text{L-alg}(X)\oplus H^{k-1,k}_\text{L-alg}(X)$ because of the Hodge decomposition.  Also there exists a partial version of the previous result, which asks whether or not a sub-Hodge structure $W \subset H^{2k-1}(X,\Z)$ is contained in the image of the action of some Lichtenbaum correspondence over cohomology groups.

In the following proposition, we characterize this partial \'etale version of the generalized Hodge conjecture  of a Hodge structure of weight $2k-1$ and level 1, give a general description of the $\text{GHC}_L(2k-1,k-1,X)$ and its equivalence to $\text{GHC}(2k-1,k-1,X)_\Q$, as is stated in \cite[Remark 5.2]{RS}:

\begin{prop}\label{k}
Let $X \in \text{SmProj}_\C$, $k \in \mathbb{N}_{\geq 1}$ and let $W \subset H^{2k-1}(X,\Z)$ be a sub Hodge structure of level $1$. Then:
\begin{enumerate}
    \item[(i.)] there exist $Y\in  \text{SmProj}_\C$ and a Lichtenbaum correspondence $z \in \CH^{d_Y+1}_L(Y\times X)$ such that $W \subset z_* H^1(Y,\Z)$ if and only if for all curves $C \in  \text{SmProj}_\C$ the Hodge classes $H^{k,k}(C\times X)\cap \left\{H^1(C,\Z)\otimes W\right\}$ are algebraic.
    \item[(ii.)] In particular $\text{GHC}_L(2k-1,k-1,X)$ holds if and only if for all curves $C \in  \text{SmProj}_\C$ the Hodge classes $H^{k,k}(C\times X)\cap \left\{H^1(C,\Z)\otimes H^{2k-1}(X,\Z)\right\}$ are L-algebraic, i.e. 
    $$H^{k,k}(C\times X)\cap \left\{H^1(C,\Z)\otimes H^{2k-1}(X,\Z)\right\} \subset \text{im}\left\{c_L^k:\CH^k_L(C\times X)\to H^{2k}_B(X,\Z(k))\right\}.$$
    \item[(iii.)] $\text{GHC}(2k-1,k-1,X)_\Q$ holds if and only if $\text{GHC}_L(2k-1,k-1,X)$ holds.
\end{enumerate}

\end{prop}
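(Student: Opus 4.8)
The plan is to establish the three statements in sequence, with (i.) doing the essential work and (ii.), (iii.) following as specializations and combinations with earlier results.

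\textbf{Part (i.): the correspondence dictionary.} First I would unwind what it means for a Lichtenbaum correspondence $z \in \CH^{d_Y+1}_L(Y\times X)$ with $\dim Y = 1$ to satisfy $W \subset z_* H^1(Y,\Z)$. The key is the standard translation, valid because $\dim Y = 1$, between the pushforward action $z_* : H^1(Y,\Z) \to H^{2k-1}(X,\Z)$ and a Hodge class on the product. Concretely, a morphism $\sigma_* : H^1(Y,\Z) \to H^{2k-1}(X,\Z)$ of Hodge structures corresponds, via the K\"unneth decomposition and Poincar\'e duality on the curve, to an element of $\bigl(H^1(Y,\Z)^\vee \otimes H^{2k-1}(X,\Z)\bigr)$ of Hodge type $(k,k)$, i.e. to a class in $H^{k,k}(Y\times X)\cap\{H^1(Y,\Z)\otimes H^{2k-1}(X,\Z)\}$ (using $H^1(Y)^\vee \simeq H^1(Y)(1)$). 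Under the Lichtenbaum cycle class map $c_L^k$, the existence of a Lichtenbaum correspondence inducing $\sigma_*$ is exactly the statement that this Hodge class lies in the image of $c_L^k : \CH^k_L(Y\times X) \to H^{2k}_B(Y\times X,\Z(k))$. Running this argument for every curve $C$ in place of $Y$, and noting that any sub-Hodge structure $W$ of level $1$ in $H^{2k-1}(X,\Z)$ is (after tensoring appropriately) realized through curves, gives the claimed equivalence: $W$ is contained in the image of some Lichtenbaum correspondence from a curve precisely when all the relevant Hodge classes on products $C\times X$ are L-algebraic. I would take care here to argue in both directions, the nontrivial one being that from the hypothesis on all curves one can assemble a single $Y$ and $z$ supporting $W$, which uses that $W$ has finite rank and that finitely many curves suffice.

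\textbf{Part (ii.): specialization to the full structure.} Here I would simply apply (i.) with $W = H^{2k-1}(X,\Z)$, the full Hodge structure. By the definitions of $H^{2k-1}_{\text{L-alg}}(X,\Z)$ and $H^{2k-1}_{\max}(X,\Z)$ recalled before the proposition, the equality $H^{2k-1}_{\text{L-alg}}(X,\Z) = H^{2k-1}_{\max}(X,\Z)$ is by definition equivalent to $\text{GHC}_L(2k-1,k-1,X)$, and (i.) identifies this equality with the L-algebraicity of all the Hodge classes $H^{k,k}(C\times X)\cap\{H^1(C,\Z)\otimes H^{2k-1}(X,\Z)\}$ as $C$ ranges over smooth projective curves. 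So (ii.) is a direct transcription; the only point to check is that the level-$1$ condition guarantees the sub-Hodge structure in question is exactly the maximal one appearing in the intermediate Jacobian picture.

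\textbf{Part (iii.): passing between integral L and rational coefficients.} This is where I invoke Proposition \ref{propE}. The classical analogue of (ii.), namely Proposition \ref{prop} (the Lewis/Grothendieck statement), asserts that $\text{GHC}(2k-1,k-1,X)_\Q$ holds iff the same Hodge classes $H^{k,k}(C\times X)\cap\{H^1(C,\Z)\otimes H^{2k-1}(X,\Z)\}$ are \emph{rationally} algebraic for every curve $C$. Now these Hodge classes are of type $(k,k)$ in $H^{2k}_B(C\times X,\Z(k))$, so each generates a sub-Hodge structure of type $(k,k)$ to which Proposition \ref{propE} applies verbatim: such a class is L-algebraic iff it is rationally algebraic. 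Combining this equivalence with (ii.) on the L-side and Proposition \ref{prop} on the rational side yields $\text{GHC}_L(2k-1,k-1,X) \iff \text{GHC}(2k-1,k-1,X)_\Q$.

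\textbf{Main obstacle.} I expect the substantive difficulty to lie entirely in Part (i.), specifically in making the correspondence-to-Hodge-class dictionary precise with \emph{Lichtenbaum} cycles and \emph{integral} coefficients, where the subtleties of $\CH^k_L$ versus $\CH^k$ (torsion phenomena, the failure of $\kappa^{m,n}$ to be surjective in codimension $\geq 2$) could interfere. The reduction to finitely many curves and the assembly of a single supporting $Y$ also requires some care, since one must respect the integral lattice structure rather than merely working up to isogeny. Once (i.) is secured, (ii.) and (iii.) are formal consequences of the earlier results, with Proposition \ref{propE} doing the crucial integral-to-rational comparison in (iii.).
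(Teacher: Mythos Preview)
Your outline for (ii.) and (iii.) matches the paper's proof and is fine. The substantive gaps are in (i.), where both directions require specific constructions you have not identified.

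\textbf{Forward direction of (i.).} You have not explained how the existence of \emph{one} pair $(Y,z)$ with $W\subset z_*H^1(Y,\Z)$ forces \emph{every} Hodge class $h\in H^{k,k}(C\times X)\cap\{H^1(C,\Z)\otimes W\}$, for every curve $C$, to be L-algebraic. The dictionary you describe only converts a given correspondence into a given Hodge class; it does not propagate algebraicity from $Y$ to arbitrary $C$. The paper's mechanism is a factorization: viewing $h$ as $h_*:H^1(C,\Z)\to W$, one uses the polarization on $H^1(Y,\Z)$ to split off $R=(\ker z_*)^\perp$ and obtains $\lambda:=(z_*|_R)^{-1}:W\to H^1(Y,\Z)$. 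Then $\lambda\circ h_*:H^1(C,\Z)\to H^1(Y,\Z)$ is a morphism of weight-$1$ Hodge structures, hence a class in $H^{1,1}(C\times Y)\cap H^2(C\times Y,\Z)$, which is algebraic by Lefschetz $(1,1)$. Composing with the L-algebraic $z$ shows $h$ is L-algebraic. Without this Lefschetz-$(1,1)$ bridge you have no argument.

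\textbf{Converse direction of (i.).} Your plan to ``assemble a single $Y$'' from finitely many curves because $W$ has finite rank is not the paper's argument and, as stated, is not a proof: individual generators of $W$ are not sub-Hodge structures, so one cannot apply the hypothesis curve by curve. The paper instead exploits the level-$1$ hypothesis directly: since $W_\C=W^{k-1,k}\oplus W^{k,k-1}$, the intermediate Jacobian $J^k(W)=W^{k-1,k}/W$ is a polarized \emph{abelian variety}, with $H_1(J^k(W),\Z)\simeq W$. A general curve section $\Gamma\subset J^k(W)$ (Bertini) then gives a surjection $H^1(\Gamma,\Z)\twoheadrightarrow W$, and this surjection, viewed as a class in $H^{k,k}(\Gamma\times X)\cap\{H^1(\Gamma,\Z)\otimes W\}$, is L-algebraic by hypothesis---yielding the desired $z\in\CH^k_L(\Gamma\times X)$. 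This Jacobian-plus-Bertini step is the heart of the converse and is absent from your proposal.
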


\begin{proof}
For (i.), let $W \subset H^{2k-1}(X,\Z)$ be a sub Hodge structure of weight $2k-1$ and level $1$ and assume that there exists $Y\in  \text{SmProj}_\C$ and a Lichtenbaum correspondence $z \in \CH^{d_Y+k-1}_L(Y\times X)$ (correspondence of degree $k-1$) such that $W \subset z_* H^1(Y,\Z)$. Consider $C$ a smooth complex projective curve and consider an element $h \in \left\{H^1(Y,\Z)\otimes W\right\}\cap H^{k,k}(C\times X)\simeq \text{Hom}_{\text{HS}}(H^1(C,\Z),W)$. Let $h_*: H^1(C,\Z)\to W$ be the map of Hodge structures induced by $h$. Define $V=\text{ker}(z_*)$, which by the theory of Hodge structures, is a Hodge structure itself. We know that the image of $\text{im}(z_*)$ is a  Hodge structures of the same weight (see {\cite[Lemme 7.23 et 7.25]{Voi}}), then $H^1(Y,\Z)=V\oplus R$ where $R= V^\perp$. Then we have a morphism $\lambda:=\left(z_*|_R\right)^{-1}\Big|_W: \text{im}(z_*)\cap W \subset H^{2k-1}(X,\Z)\to R$ which fits into the following commutative diagram
  \[
  \begin{tikzcd}
H^1(C,\Z) \arrow{r}{h_*} \arrow{d}{h_*} & W \arrow[equal]{r} & \text{im}(z_*)\cap W \arrow{r}{\lambda} & H^1(Y,\Z) \arrow{d}{z_*} \\
W \arrow[equal]{rrr} & & & W.
  \end{tikzcd}
\]
The map obtained obtained from the upper arrows is induced by a Hodge class by Lefschetz (1,1) and therefore $h$ is algebraic. 

Conversely, suppose that for all smooth and projective curve $C$ the Hodge classes $H^{k,k}(C\times X)\cap \left\{H^1(C,\Z)\otimes W\right\}$ are algebraic. Let $W\subset H^{2k-1}(X,\Z)$ be a sub-Hodge structure of level 1 and notice that $W$ has a decomposition as $W\otimes \C=W^{k,k-1}\oplus W^{k-1,k}$. Then its associated $k$-th intermediate Jacobian is of the form $J^k(W)=W^{k-1,k}/W$ which is an abelian variety. Since $J^k(W)$ is a complex torus, then its holomorphic tangent bundle is $W^{k-1,1}$ and the fundamental group is isomorphic to the lattice $W$, thus $\pi_1(J^k(W))\simeq H_1(J^k(W),\Z)=W$. Set $m=\text{dim}(J^k(W))$ then $H^{2m-1}(J^k(W),\C)=H^{m-1,m}(J^k(W))\oplus H^{m,m-1}(J^k(W))$ and
\begin{align*}
H^{m-1,m}(J^k(W))&\simeq H^{1,0}(J^k(W))^* \\
&=H^0(J^k(W),\Omega^1_{J^k(W)})^*\\
&\simeq H^0(J^k(W),\Omega^1_{J^k(W)})^*\\
&\simeq H^0(J^k(W),T^*_{J^k(W)})^*\simeq W^{k-1,k}.
\end{align*}
then $H^{2m-1}(J^k(W),\C)=W^{k-1,k}\oplus W^{k,k-1}$.

 Taking hyperplane sections of $J^k(W)$ and applying Bertini's theorem, we find a smooth projective curve $\Gamma \subset J^k(W)$ and a surjective map $H_1(\Gamma,\Z)\to H_1(J^k(W),\Z)\simeq W$. Also by Poincar\'e duality $H_1(\Gamma,\Z)\simeq H^1(\Gamma,\Z)$ so we have a surjective map $f: H^1(\Gamma,\Z) \to W$. Since the map $f$ is a morphism of Hodge structures, then it is an element in $H^{k,k}(\Gamma\times X)\cap \left\{H^1(\Gamma,\Z)\otimes W\right\}$ which by hypothesis is L-algebraic. Therefore there exists a class $z \in \CH^{2k}_L(\Gamma \times X)$ such that $W \subset  z_* H^1(\Gamma,\Z) \subset H^{2k-1}(X,\Z)$.
 
The statement (ii.) is a direct consequence of (i.) taking $W=H^{2k-1}(X,\Z)$ and the maximal sub Hodge structure of it $H^{2k-1}_\text{max}(X,\Z)$. For (iii.) notice that for a complex smooth projective curve $C$ the Betti cohomology groups are torsion free. Thus Künneth formula holds for the product $C\times X$ and then
\begin{align*}
    H^{k,k}(C\times X)\cap \left\{H^1(C,\Z)\otimes H^{2k-1}(X,\Z)\right\} \subset  H^{k,k}(C\times X)\cap H^{2k}(C\times X,\Z)=\text{Hdg}^{2k}(C\times X,\Z).
\end{align*}
Invoking Proposition \ref{propE} $H^{k,k}(C\times X)\cap \left\{H^1(C,\Z)\otimes H^{2k-1}(X,\Z)\right\}$ is L-algebraic if and only if $H^{k,k}(C\times X)\cap \left\{H^1(C,\Q)\otimes H^{2k-1}(X,\Q)\right\}$ is algebraic in the usual sense, which gives us the equivalences
 \begin{align*}
     \text{GHC}(2k-1,k-1,X)_\Q&\text{ holds} \\ &\iff \ H^{k,k}(C\times X)\cap \left\{H^1(C,\Q)\otimes H^{2k-1}(X,\Q)\right\} \text{ is alg. $\forall$ curve }C \\
     &\iff H^{k,k}(C\times X)\cap \left\{H^1(C,\Z)\otimes H^{2k-1}(X,\Z)\right\} \text{ is L-alg. $\forall$ curve }C \\
     &\iff  \text{GHC}_L(2k-1,k-1,X)\text{ holds.}
 \end{align*}
\end{proof}

In the sequel, we give more subtle relations between the Hodge conjecture and the generalized one, following the proof of the classical case given in \cite[Lemma 2.3]{Fu}:

\begin{lemma}
Let $X$ be a smooth projective variety of dimension $n$ and $H\subset H^k(X,\Z)$ be a sub-Hodge structure of coniveau at least $c$ and assume that there exists a smooth projective variety $Y$ of dimension $d_Y$, such that $H(c)$ is a sub-Hodge structure of $H^{k-2c}(Y,\Z)$. Then if $H^{d_Y+c,d_Y+c}(Y\times X)\cap \left\{H^{2(d_Y+c)-k}(Y,\Z)\otimes H^{k}(X,\Z)\right\}$ is L-algebraic then generalized L-Hodge conjecture for $H$ holds. 
\end{lemma}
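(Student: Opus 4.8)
This statement is the Lichtenbaum analogue of the classical criterion in \cite[Lemma 2.3]{Fu}, so the plan is to transcribe that argument into the étale setting and to use Proposition \ref{propE} to pass from rational to integral coefficients. Throughout write $d=d_Y$ and $M=H^{k-2c}(Y,\Z)$. \emph{First I would reformulate the goal as a morphism of Hodge structures.} Working modulo torsion, the Künneth formula together with Poincaré duality on $Y$, namely $H^{2(d+c)-k}(Y,\Z)\simeq M^{\vee}(-d)$, identifies the group appearing in the hypothesis with a group of integral morphisms of Hodge structures:
\begin{align*}
H^{d+c,d+c}(Y\times X)\cap\left\{H^{2(d+c)-k}(Y,\Z)\otimes H^{k}(X,\Z)\right\}\;\simeq\;\ho_{\text{HS}}\!\left(M(-c),H^{k}(X,\Z)\right).
\end{align*}
Under this identification a class $\theta$ on the left corresponds to the cohomological action $\theta_*\colon M(-c)\to H^{k}(X,\Z)$ of the associated correspondence, and $\theta$ is L-algebraic exactly when $\theta_*=z_*$ for some $z\in\CH_L^{d+c}(Y\times X)=\text{Corr}_L^{c}(Y,X)$. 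Thus the L-generalized Hodge conjecture for $H$ reduces to producing an L-algebraic $\theta$ with $H\subset\text{im}(\theta_*)=z_*H^{k-2c}(Y,\Z)$.

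\emph{Next I would construct the morphism rationally and invoke the hypothesis.} Since $H(c)\subset M$ is a sub-Hodge structure and $Y$ is smooth projective, the polarization on $M_\Q$ and the semisimplicity of polarizable rational Hodge structures furnish a projection $p\colon M_\Q\twoheadrightarrow H(c)_\Q$ splitting the inclusion. Untwisting and composing with $H_\Q\hookrightarrow H^{k}(X,\Q)$ gives $\psi\in\ho_{\text{HS}}(M(-c),H^{k}(X,\Q))$ with $\text{im}(\psi)=H_\Q$. Clearing denominators, $\theta:=N\psi$ becomes integral for a suitable $N\geq 1$, so that $NH\subset\text{im}(\theta_*)\subset H$ is a finite-index subgroup. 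By assumption every class in the group of the previous step is L-algebraic; in particular $\theta=c_L(z)$ for some $z\in\CH_L^{d+c}(Y\times X)$, whence $z_*H^{k-2c}(Y,\Z)=\text{im}(\theta_*)\supset NH$.

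\emph{The hard part will be removing the denominator.} The delicate point, and the one where the Lichtenbaum setting is essential, is that the rational splitting produces only the finite-index lattice $NH$, while Conjecture \ref{conjL} requires the full lattice $H$. This is exactly the phenomenon handled in Proposition \ref{propE}: lifting the action of $z$ to Deligne cohomology and intermediate Jacobians, as in Remark \ref{remDel}, one obtains a commutative ladder whose torsion rows are isomorphisms by Lemma \ref{lemma} and the surjectivity of $c_L$ on torsion, together with $\CH_L(\,\cdot\,)_{\hom}\otimes\Q/\Z=0$ and the divisibility of $J^{\bullet}$. A snake-lemma argument of the type carried out in Proposition \ref{propE} should then show that the cokernel of $z_*\colon H^{k-2c}(Y,\Z)\to H$ is torsion-free; being also finite by the full-rank statement of the previous step, it must vanish, giving $H=z_*H^{k-2c}(Y,\Z)$ and hence the L-generalized Hodge conjecture for $H$. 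I expect this torsion-freeness to be the technical core of the argument, the remaining steps being a faithful transcription of the classical proof of \cite[Lemma 2.3]{Fu}.
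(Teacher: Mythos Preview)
Your first two paragraphs match the paper's argument: identify the Künneth piece with $\ho_{\text{HS}}(H^{k-2c}(Y,\Z)(-c),H^{k}(X,\Z))$, build the morphism $f$ out of the projection onto $H(c)$ followed by the untwist and the inclusion $H\hookrightarrow H^{k}(X,\Z)$, and then invoke the hypothesis of L-algebraicity to realise $f$ by a Lichtenbaum cycle $\gamma\in\CH_L^{d_Y+c}(Y\times X)$ with $H\subset\gamma_*H^{k-2c}(Y,\Z)$. That is exactly what the paper does, and it stops there.

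Your third paragraph, however, invents a difficulty the paper does not confront and proposes a cure that does not fit. The paper simply writes the polarization splitting as an integral decomposition $H^{k-2c}(Y,\Z)\simeq H(c)\oplus R$ and takes $f$ to be integral from the outset; there is no denominator to remove and no appeal to Proposition~\ref{propE}, Lemma~\ref{lemma}, Deligne cohomology, or intermediate Jacobians in this proof. More importantly, the mechanism you sketch would not work: Proposition~\ref{propE} shows that the obstruction group $W/\text{im}(c_L^k)$ is torsion-free for a sub-Hodge structure $W$ of type $(k,k)$, by comparing the Lichtenbaum Deligne cycle map with the classical one on torsion. That argument lives entirely on the cycle-class side; it says nothing about the cokernel of the \emph{action} $z_*\colon H^{k-2c}(Y,\Z)\to H^{k}(X,\Z)$ of a fixed correspondence on cohomology, which is a purely Hodge-theoretic map once $c_L(z)$ is known. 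The snake-lemma ladder you allude to has no analogue here because there is no ``$\CH_L(\cdot)_{\hom}$ versus $J^{\bullet}$'' comparison governing $\text{coker}(z_*)$. So drop the third paragraph: the proof the paper gives is the two-step one you already wrote, with the projection taken integrally after declaring the splitting $H^{k-2c}(Y,\Z)\simeq H(c)\oplus R$ (and torsion dismissed at the start since torsion Betti classes are always in the image of $c_L$).
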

\begin{proof}
Since torsion classes come from Lichtenbaum cycles, for simplicity we will neglect torsion Hodge classes. Suppose that $H$ is a sub-Hodge structure of $H^k(X,\Z)$ of weight $k$ and coniveau $c$. We know that $H(c)$ is still an effective Hodge structure, then there is a smooth projective variety $Y$ such that $H(c)$ is a sub-Hodge structure of $H^{k-2c}(Y,\Z)$, which by polarization can be decomposed as $H^{k-2c}(Y,\Z)\simeq H(c)\oplus R$. Consider $f:H^{k-2c}(Y,\Z)\to H^{k}(X,\Z)$ the morphism resulting from the composition of the following maps
\begin{align*}
    H^{k-2c}(Y,\Z)\xrightarrow{\text{pr}_1} H(c) \xrightarrow{\text{id}\otimes \Z(-c)} H \hookrightarrow H^{k}(X,\Z)
\end{align*}
 Since $ \ho_{\text{HS}\Z}(H^{k-2c}(Y,\Z),H^{k}(X,\Z)) \simeq H^{2(d_Y+c)-k}(Y,\Z)\otimes H^k(X,\Z) $ by the hypothesis that $H^{d_Y+c,d_Y+c}(Y\times X)\cap \left\{H^{2(d_Y+c)-k}(Y,\Z)\otimes H^{k}(X,\Z)\right\}$ is L-algebraic we conclude that $f$ comes from a Lichtenbaum algebraic cycle $\gamma \in \CH^{d_Y+c}_L(Y\times X)$ and $H \subset \gamma_* H^{k-2c}(Y,\Z)$. Thus the generalized L-Hodge conjecture holds for $H$. 
\end{proof}

Using the same kind of arguments, and adding an hypothesis of effectiveness it is possible to characterize the generalized Hodge conjecture in terms of the integral Hodge conjecture in the \'etale setting.
 
 \begin{theorem}\label{teo}
The Lichtenbaum generalized Hodge conjecture  for all $X \in \text{SmProj}_\C$ holds if and only if the following two conditions hold: 
\begin{itemize}
    \item the Lichtenbaum Hodge conjecture holds,
    \item a homological \'etale motive is effective if and only if its Hodge realization is effective.
\end{itemize}
\end{theorem}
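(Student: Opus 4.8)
The plan is to transpose Grothendieck's classical characterization (\cite[Page 301]{Gro}, recalled above) into the category $\ch_\et(\C)$ of étale Chow motives, replacing algebraic cycles by Lichtenbaum cycles and $c$ by $c_L$, and using Proposition \ref{propE} as the bridge between integral $L$-algebraicity and rational algebraicity. The guiding dictionary is that of Conjecture \ref{conjL}: a sub-Hodge structure $H\subset H^n(X,\Z)$ of level $l=n-2c$ is supported in codimension $c$ exactly when, after the Tate twist that makes it effective, the corresponding étale motive is effective. The two halves of the statement then correspond to the two hypotheses of the Lemma preceding this theorem, which already packages the ``building block'' of the argument.

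For the direction assuming the two conditions, suppose the Lichtenbaum Hodge conjecture and the motive-effectivity equivalence hold, and let $H\subset H^n(X,\Z)$ be a sub-Hodge structure of level $l=n-2c$. The projector of $H^n(X)$ onto $H$ is a Hodge class on $X\times X$, hence $L$-algebraic by the Lichtenbaum Hodge conjecture for $X\times X$; it defines an étale motive $M\subset h_\et(X)$ whose Hodge realization is $H$. Since $H$ has level $l$, the twist $H(c)$ is effective, so the homological étale motive $M(c)$ has effective realization and is therefore effective by hypothesis. This produces $Y\in\text{SmProj}_\C$ with $H(c)$ a sub-Hodge structure of $H^l(Y,\Z)$, which is precisely hypothesis (a) of the preceding Lemma; hypothesis (b), the $L$-algebraicity of the relevant Hodge classes on $Y\times X$, is supplied by the Lichtenbaum Hodge conjecture for $Y\times X$. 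The Lemma then delivers $\text{GHC}_L(n,c,X)$ for $H$, and $X$ was arbitrary.

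For the converse, assume the Lichtenbaum generalized Hodge conjecture for all $X$. The Lichtenbaum Hodge conjecture is the case of weight $2k$ and coniveau $k$: a class $\alpha\in\text{Hdg}^{2k}(X,\Z)$ generates a sub-Hodge structure of level $0$, so $\text{GHC}_L(2k,k,X)$ forces $\alpha\in\text{im}(c_L^k)$, torsion classes being $L$-algebraic unconditionally by the surjectivity of $\CH^k_L(X)_\text{tors}\to H^{2k}_B(X,\Z(k))_\text{tors}$. That an effective étale motive has effective realization is formal, so for the effectivity equivalence only the converse needs argument: given a homological étale motive $M=(X,p,m)$ whose realization $H$ is effective of weight $n$ and level $l$, the class $H$ is automatically a sub-Hodge structure of $H^n(X,\Z)$ of coniveau $c=(n-l)/2$, and applying $\text{GHC}_L(n,c,X)$ yields $z\in\text{Corr}_L^c(Y,X)$ with $H\subset z_*H^l(Y,\Z)$. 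The corresponding morphism in $\ch_\et(\C)$ realizes $M$ as a direct summand of the effective motive $h_\et(Y)(-c)$, whence $M$ is effective.

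The main obstacle will be the integral bookkeeping that Grothendieck's rational argument sidesteps: integral sub-Hodge structures need not split off as direct summands, and an integral $L$-algebraic cycle representing a projector need not be a genuine idempotent in $\text{Corr}_\et^0(X,X)$, so the construction of the motive $M$ and the passage ``effective realization $\Rightarrow$ effective motive'' must be handled with care. This is exactly where Proposition \ref{propE} is indispensable, since it certifies that the classes on $X\times X$ and $Y\times X$ entering the argument are $L$-algebraic precisely when they are rationally algebraic, letting me build the idempotent $L$-correspondences and the supporting cycle $z$ integrally while importing the splitting afforded by polarizations from the rational theory. A secondary point to verify is that the Hodge realization on $\ch_\et(\C)$ commutes with Tate twists and respects the effectivity order, so that $M\mapsto M(c)$ matches $H\mapsto H(c)$ and effectivity of motives corresponds to effectivity of Hodge structures.
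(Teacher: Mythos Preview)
Your approach is essentially the paper's: in both directions you use the Lichtenbaum Hodge conjecture to realize the relevant Hodge classes as $L$-correspondences and the effectivity hypothesis to produce the auxiliary variety $Y$, then conclude via the mechanism of the preceding Lemma (which the paper reproves inline). The one place you are more explicit is in constructing the motive $M$ by first lifting the projector onto $H$ via the $L$-Hodge conjecture on $X\times X$ before invoking effectivity to find $Y$; the paper simply asserts the existence of $Y$ at that point, so your route actually clarifies where the second hypothesis enters, and the integral subtleties you flag in your final paragraph are present at the same level in the paper's own argument.
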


\begin{proof}
The generalized L-Hodge conjecture immediately implies the L-Hodge conjecture. Suppose that $M$ has an effective realization and let $H:=\rho_H(M)$ be its associated Hodge structure of weight $n$ and coniveau $c$. By the generalized L-Hodge conjecture there exists $Y\in \text{SmProj}_\C$ and $\gamma \in \CH^{d_Y+c}_L(Y\times X)$ such that $H \subset \gamma_* H^{n-2c}(Y,\Z) \subset H^n(X,\Z)$. The motive $M(c)$ is effective in $h(Y)$, thus $M$ is effective because it is a subobject of the effective motive $h(X)$.

Assume that the L-Hodge conjecture holds for every $X \in \text{SmProj}_\C$ and that a homological motive is effective if and only if its realization is effective. We can neglect torsion Hodge classes because they always come from torsion algebraic cycles. Suppose that $H$ is a sub-Hodge structure of $H^n(X,\Z)$ of weight $n$ and coniveau $c$. We know that $H(c)$ is still an effective Hodge structure. Then there is a smooth projective variety $Y$ such that $H(c)$ is a sub-Hodge structure of $H^{n-2c}(Y,\Z)$ which by polarization can be decomposed as $H^{n-2c}(Y,\Z)\simeq H(c)\oplus R$. Consider $f:H^{n-2c}(Y,\Z)\to H^{n}(X,\Z)$ the morphism resulting from the composition of the following maps
\begin{align*}
    H^{n-2c}(Y,\Z)\xrightarrow{\text{pr}_1} H(c) \xrightarrow{\text{id}\otimes \Z(-c)} H \hookrightarrow H^{n}(X,\Z)
\end{align*}
 Since $ \ho_{\text{HS}\Z}(H^{n-2c}(Y,\Z),H^{n}(X,\Z)) \simeq H^{2(d_Y+c)-n}(Y,\Z)\otimes H^n(X,\Z)$ by the assumption of the Hodge conjecture and Proposition \ref{propE} we conclude that $f$ comes from a Lichtenbaum algebraic cycle $\gamma \in \CH^{d_Y+c}_L(Y\times X)$ and $H \subset \gamma_* H^{n-2c}(Y,\Z)$. Thus the generalized L-Hodge conjecture holds. 
\end{proof}

Then we have the following corollary coming from the previous characterizations of the Generalized Hodge conjecture (classical and Lichtenbaum setting)
\begin{corollary}\label{corf}
The generalized Hodge conjecture holds if and only if the generalized L-Hodge conjecture holds.
\end{corollary}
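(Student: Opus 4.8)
The plan is to deduce the corollary by confronting the two structural characterizations already in hand: the classical reformulation recalled above (\cite{Gro}), which asserts that $\text{GHC}(n,c,X)_\Q$ for all $X$ is equivalent to the conjunction of the rational Hodge conjecture with the effectivity criterion ``a homological motive is effective if and only if its Hodge realization is effective,'' and Theorem \ref{teo}, which gives the exactly parallel characterization of $\text{GHC}_L(n,c,X)$ in terms of the Lichtenbaum Hodge conjecture together with the effectivity criterion for homological \'etale motives. It therefore suffices to match the two pairs of conditions term by term.

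First I would identify the two Hodge conjectures. Applying Proposition \ref{propE} with $W=\text{Hdg}^{2k}(X,\Z)$ for every $k$ and every $X\in\text{SmProj}_\C$ shows that $\text{HC}^k_L(X)$ holds precisely when $\text{HC}^k(X)_\Q$ holds; hence the Lichtenbaum Hodge conjecture (for all $X$) is equivalent to the rational Hodge conjecture. This settles the ``Hodge conjecture'' halves of the two characterizations at once.

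Next I would reconcile the two effectivity conditions. The Hodge realization attaches the same Hodge structure to an object whether it is viewed as a classical motive or as its image under the vertical embedding $\ch(k)\to\ch_\et(k)$ of the commutative square of the introduction, and effectivity of a Hodge structure---the vanishing of $H^{p,q}$ for $p<0$ or $q<0$---is an intrinsic feature of its complex decomposition, unaffected by $\otimes\,\Q$. Consequently ``the realization is effective'' means the same thing in both settings, and the content reduces to transferring effectivity of the motive itself between the Nisnevich and \'etale categories. One direction (a motive effective $\Rightarrow$ its realization effective) is automatic, since the realization respects the effective subcategory; the converse, that an effective realization forces the \'etale (resp.\ classical) motive to be effective, is exactly the ability to support the associated Hodge structure by a Lichtenbaum (resp.\ algebraic) correspondence, which is again governed by Proposition \ref{propE}. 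This is the step I expect to be the main obstacle, as it requires checking that the equivalence of the two motivic effectivity notions is not merely formal but is precisely mediated by the integral-versus-rational comparison of correspondences.

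Finally I would assemble the chain of equivalences: $\text{GHC}(n,c,X)_\Q$ for all $X$ holds if and only if the rational Hodge conjecture and the classical effectivity criterion hold, if and only if the Lichtenbaum Hodge conjecture and the \'etale effectivity criterion hold, if and only if $\text{GHC}_L(n,c,X)$ holds for all $X$, the middle equivalence being the two matchings just described. As an independent check one can also argue directly, bypassing the motive language: $\text{GHC}_L\Rightarrow\text{GHC}_\Q$ by rationalizing a Lichtenbaum correspondence, while $\text{GHC}_\Q\Rightarrow\text{GHC}_L$ by combining a rational algebraic correspondence with the unconditional fact that torsion Hodge classes lift to Lichtenbaum cycles, recovering the same conclusion through the mechanism of Proposition \ref{k}.
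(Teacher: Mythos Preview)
Your proposal is correct and takes essentially the same approach as the paper: the corollary is stated there as an immediate consequence of the two parallel characterizations---Grothendieck's classical one and Theorem~\ref{teo}---together with the equivalence of the rational and Lichtenbaum Hodge conjectures supplied by Proposition~\ref{propE} (which is exactly \cite[Theorem 1.1]{RS} in the full case $W=\text{Hdg}^{2k}(X,\Z)$). The paper gives no further argument beyond this sentence, so your more detailed term-by-term matching, including the observation about the effectivity criteria and the alternative direct route via rationalizing correspondences, is a faithful elaboration of what the paper leaves implicit.
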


\subsection{Bardelli's example}

Let us recall the example presented in \cite{Bar} of a certain threefold $X$ where $\text{GHC}(3,1,X)_\Q$ holds. Let $\sigma: \Pro^7\to \Pro^7$ be the involution defined as $\sigma(x_0:\ldots:x_3:y_0,\ldots,y_3)=(x_0:\ldots:x_3:-y_0,\ldots,-y_3)$ 
 and let $X=V(Q_0,Q_1,Q_2,Q_3)$ be a smooth complete intersection of four $\sigma$-invariant quadrics. There exists a smooth irreducible curve $C$, of genus 33,  obtained as the intersection of two nodal surfaces, and an \'etale double covering $\widetilde{C}\to C$ such that $H^1(\widetilde{C},\Q)^- \to H^3(X,\Q)^-$ is surjective, where the first group is the anti-invariant part of the involution $\tau: \widetilde{C}\to \widetilde{C}$ associated to the double covering and the later group is the anti-invariant part associated to the involution $\sigma$. Notice that by \cite[Fact 2.4.1]{Bar} if we assume that  $X$ is a very general threefold, then $H^3(X,\Q)^+$ and $H^3(X,\Q)^-$ are perpendicular with respect to the cup product on $H^3(X,\Q)$ and $H^{3,0}(X) \subset H^3(X,\C)^+$ therefore $H^3(X,\Q)^-$ is a polarized Hodge structure perpendicular to $H^{3,0}(X)$ i.e. a polarized sub-Hodge structure of $H^3(X,\Q)$ of level 1. The isogeny $\alpha: \text{Prym}(\widetilde{C}\to C)\to J(X)^-$, where $ J(X)^-$ is the projection of $H^{1,2}(X)^-$ into $J^2(X)$, is the correspondence that induces the isomorphism $H^1(\widetilde{C},\Q)^- \to H^3(X,\Q)^-$, but in the case of integral coefficients the image of the correspondence is a subgroup of index 2. From the previous results we have the following equivalences:
 \begin{align*}
     \text{GHC}(3,1,X)_\Q\text{ holds for }&H^3(X,\Q)^- \\ &\iff \ H^{2,2}(\Gamma \times X)\cap \left\{H^1(\Gamma,\Q)\otimes H^3(X,\Q)^-\right\} \text{ is alg. $\forall$ curve }\Gamma \\
     &\iff H^{2,2}(\Gamma\times X)\cap \left\{H^1(\Gamma,\Z)\otimes H^3(X,\Z)^-\right\} \text{ is L-alg. $\forall$ curve }\Gamma \\
     &\iff  \text{GHC}_L(3,1,X)\text{ holds for }H^3(X,\Z)^-
 \end{align*}
 so there exists a smooth projective curve $\Gamma'$ and a correspondence $z \in \CH^2_L(\Gamma'\times X)$ such that $H^3(X,\Z)^- \subset z_* H^1(\Gamma',\Z)$. 

\printbibliography[title={Bibliography}]
 
\info

\end{document}